\newtheorem{theo}{Theorem}[section]
\newtheorem{lemma}[theo]{Lemma}
\newtheorem{prop}[theo]{Proposition}
\newtheorem{cor}[theo]{Corollary}
\newtheorem{defi}[theo]{Definition}
\newtheorem*{theoremA}{Theorem A}
\theoremstyle{definition}
\newtheorem{ex}[theo]{Example}
\newtheorem{rem}[theo]{Remark}
\newcommand{\f}{\phi}
\newcommand{\Gl}{\operatorname{GL}}
\newcommand{\Aut}{\operatorname{Aut}}
\newcommand{\Hom}{\operatorname{Hom}}
\newcommand{\id}{\operatorname{id}}
\newcommand{\alg}{{\operatorname{alg}}}
\newcommand{\Y}{\mathcal{Y}}
\newcommand{\de}{\delta}
\newcommand{\C}{\mathbb{C}}
\newcommand{\Ga}{\mathbb{G}_a}
\newcommand{\Rep}{\operatorname{Rep}}
\newcommand{\rank}{\operatorname{rank}}
\title{Regular singular differential equations and free proalgebraic groups}
\author{Michael Wibmer}
\address{Michael Wibmer, Institute of Analysis and Number Therory, Graz University of Technology, Kopernikusgasse~24, 8010 Graz, Austria, \url{https://sites.google.com/view/wibmer}}
\email{wibmer@math.tugraz.at}
\thanks{This work was supported by the NSF grants DMS-1760212, DMS-1760413, DMS-1760448 and the Lise Meitner grant M 2582-N32 of the Austrian Science Fund FWF}
\date{\today}
\subjclass[2020]{14L15, 34M50}
\keywords{Regular singular differential equation, differential Galois theory, free proalgebraic group}
\begin{document}
\maketitle

\begin{abstract}
	We determine the differential Galois group of the family of all regular singular differential equations on the Riemann sphere. It is the free proalgebraic group on a set of cardinality~$|\C|$. 
\end{abstract}


\section{Introduction}

\enlargethispage{20mm}

The following table depicts the beautiful analogy between classical Galois theory and differential Galois theory over the rational function field $\C(x)$. Excellent introductions to these topics can be found in \cite{Szamuely:GaloisGroupsAndFundamentalGroups} and \cite{SingerPut:differential}.

%
%
%
%
%
%
%
%
%

\vspace{5mm}

	\begin{tabularx}{\textwidth} {  >{\centering\arraybackslash}c |  >{\centering\arraybackslash}X 
			| >{\centering\arraybackslash}X }
		&	Classical Galois theory & Differential Galois theory \\ 
		\hline
	1 &	Univariate polynomials over $\C(x)$ & Linear differential equations over $\C(x)$ \\
		\hline
	2 &		Galois extension of $\C(x)$ and their Galois groups & Picard-Vessiot extensions of $\C(x)$ and their differential Galois groups \\ 
			\hline
	3 &	A finite Galois extensions $L$ of $\C(x)$ corresponds to a ramified cover $p\colon X\to\mathbb{P}^1(\C)$. If $S$ is the finite set of branch points and $x_0\in\C$ is not in $S$, then the Galois group of $L/\C(x)$ can be identified with the image of $\pi_1(\mathbb{P}^1(\C)\smallsetminus S,x_0)$ under its action on $p^{-1}(x_0)$.	& Schlesinger's density theorem: Let $S$ be a finite subset of $\mathbb{P}^1(\C)$ and $x_0\in \C$ not in $S$. The differential Galois group of a regular singular linear differential equation with singularities in $S$, can be identified with the Zariski closure of the image of $\pi_1(\mathbb{P}^1(\C)\smallsetminus S,x_0)$ under its action on the local solution space at $x_0$. \\
			\hline
			4 &	For $x_0\in \C$ and $S$ a finite subset of $\mathbb{P}^1(\C)$ not containing $x_0$, the Galois $G$ group of the maximal algebraic extension of $\C(x)$ with ramification only over $S$, is the profinite completion of $\pi_1(\mathbb{P}^1(\C)\smallsetminus S,x_0)$, i.e., $G$ is the free profinite group on a set of cardinality $|S|-1$.
				 & For $x_0\in \C$ and $S$ a finite subset of $\mathbb{P}^1(\C)$ not containing $x_0$, the differential Galois group $G$ of the family of all regular singular differential equations with singularities inside $S$, is the proalgebraic completion of $\pi_1(\mathbb{P}^1(\C)\smallsetminus S,x_0)$, i.e., $G$ is the free proalgebraic group on a set of cardinality $|S|-1$. \\
		\hline
	5 &	Douady's Theorem: The absolute Galois group of $\C(x)$ is the free profinite group on a set of cardinality $|\C|$. & \vspace{3mm} ?	
		
	\end{tabularx}

\vspace{5mm}

The main goal of this article is to fill in the above question mark. On the face of it, it may seem that the appropriate differential analog of Douady's theorem is ``The absolute differential Galois group of $\C(x)$ is the free proalgebraic group on a set of cardinality $|\C|$''. This is in fact a true statement (\cite{BachmayrHarbaterHartmannWibmer:TheDifferentialGaloisGroupOfRationalFunctionField}). However, Douady arrived at 5 via 3 and 4. In this sense, an appropriate differential analog of Douady's theorem should only be concerned with regular singular differential equations. Our main result is the following differential analog of Douady's theorem.

\begin{theoremA}[{Theorem \ref{theo: main}}] \label{theo: main introduction}
	The differential Galois group of the family of all regular singular differential equations over $\C(x)$ is the free proalgebraic group on a set of cardinality $|\C|$.
\end{theoremA}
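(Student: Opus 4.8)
The plan is to follow the pattern of Douady's proof, with the differential entry in row~4 of the table above playing the role that free profinite groups of finite rank play classically. Write $G$ for the differential Galois group of the family of all regular singular equations over $\C(x)$. Every such equation has finitely many singularities, so the Tannakian category of all regular singular equations is the directed union of its full Tannakian subcategories $\mathcal{C}_S$ of equations singular only in $S$, as $S$ runs over the finite subsets of $\mathbb{P}^1(\C)$ not containing a fixed point $x_0$; passing to Tannakian groups gives $G=\varprojlim_S G_S$, where by row~4 each $G_S$ is the free proalgebraic group of rank $|S|-1$, with a distinguished free generating set indexed by $S$ with one point deleted. For $S\subseteq S'$ the inclusion $\mathcal{C}_S\subseteq\mathcal{C}_{S'}$ gives an epimorphism $G_{S'}\twoheadrightarrow G_S$ which, for a compatible choice of loops $\gamma_p$ based at $x_0$, carries the generator attached to $s\in S$ to the one attached to $s$ and the generator attached to $s\in S'\smallsetminus S$ to $1$. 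In particular the loop $\ell_p$ around a point $p$ is a well-defined element of $G=\varprojlim_S G_S$ --- equal to $\gamma_p$ on those levels $S$ with $p\in S$ and to $1$ on those with $p\notin S$ --- and the family $(\ell_p)_{p\in\C}$ topologically generates $G$, so $\operatorname{rank}(G)\le|\C|$.

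Next I would check that $G$ is projective as a proalgebraic group. Given an embedding problem $\alpha\colon G\twoheadrightarrow A$, $\beta\colon B\twoheadrightarrow A$ with $A$ and $B$ algebraic, the epimorphism $\alpha$ factors through some $G_S$ (because $A$ is algebraic and $G=\varprojlim_S G_S$); as $G_S$ is free, hence projective, the induced embedding problem for $G_S$ has a solution $G_S\to B$, and composing it with $G\twoheadrightarrow G_S$ solves the original one.

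The substantive step is to show that every nontrivial embedding problem $\alpha\colon G\twoheadrightarrow A$, $\beta\colon B\twoheadrightarrow A$ with $A,B$ algebraic and $N:=\ker\beta\ne 1$ has exactly $|\C|$ solutions. The bound $\le|\C|$ is immediate from $\operatorname{rank}(G)\le|\C|$. For the opposite bound, fix a finite $S_0$ through which $\alpha$ factors, a solution $\chi\colon G_{S_0}\to B$ of the induced embedding problem (projectivity of $G_{S_0}$), and an element $1\ne n_0\in N$. For each $p\in\C\smallsetminus S_0$, the group $G_{S_0\cup\{p\}}$ is free on the generators attached to $S_0$ together with the loop $\gamma_p$, and $\gamma_p$ dies in $G_{S_0}$, hence in $A$; therefore the homomorphism $\psi_p\colon G_{S_0\cup\{p\}}\to B$ which equals $\chi$ on the generators coming from $S_0$ and sends $\gamma_p$ to $n_0$ solves the embedding problem for $G_{S_0\cup\{p\}}$, and $\Psi_p:=\psi_p\circ(G\twoheadrightarrow G_{S_0\cup\{p\}})$ solves the original one. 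The solutions $\Psi_p$ are pairwise distinct, since $\Psi_p(\ell_p)=n_0\ne 1$ whereas $\Psi_q(\ell_p)=1$ for $q\ne p$ (because $\ell_p$ maps to $1$ in $G_{S_0\cup\{q\}}$). Hence there are at least $|\C\smallsetminus S_0|=|\C|$ solutions.

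Finally, by the recognition criterion for free proalgebraic groups from the earlier work on this subject --- a projective proalgebraic group which is quasi-free of infinite rank $\kappa$ is the free proalgebraic group on a set of cardinality $\kappa$ --- the group $G$ is free proalgebraic on a set of cardinality $|\C|$. I expect the counting step to be the main obstacle: as already in Douady's profinite setting, freeness of the correct, uncountable rank cannot be read off a bare inverse limit, so the real work is to manufacture and then tell apart enough solutions of embedding problems, and the device that separates the contributions of the different punctures is precisely the realisation of the loop $\ell_p$ as an element of $\varprojlim_S G_S$. A subsidiary point is that one needs the theory of free proalgebraic groups over $\C$ --- projectivity, embedding problems, and the recognition criterion --- in the form provided by that earlier work.
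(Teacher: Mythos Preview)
Your overall strategy is close to the paper's, but there is a genuine gap at the very first step. You fix a base point $x_0$ and claim $G=\varprojlim_S G_S$, where $S$ ranges over finite subsets of $\mathbb{P}^1(\C)$ not containing $x_0$. This is false: the directed union $\bigcup_S\mathcal{C}_S$ consists precisely of the regular singular modules \emph{not singular at $x_0$}, which is a proper Tannakian subcategory. Concretely, if $M$ has a genuine singularity at $x_0$ (e.g.\ $\partial(y)=\tfrac{1}{2(x-x_0)}y$), then $M$ lies in no $\mathcal{C}_S$ with $x_0\notin S$, so the algebraic quotient of $G$ it determines does not factor through any $G_S$ in your system. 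Thus $\varprojlim_S G_S$ is the group $\Gamma_{\C\smallsetminus\{x_0\}}$, not $G=\Gamma_\C$, and your projectivity and counting arguments, both of which begin with ``$\alpha$ factors through some $G_S$'', simply do not apply to $G$. What your argument actually establishes is Proposition~\ref{prop: main for X} for $X=\C\smallsetminus\{x_0\}$, not Theorem~\ref{theo: main}.

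The paper handles exactly this obstruction by an additional step. It first proves, essentially along your lines, that $\Gamma_X\cong\Gamma(X)$ for every \emph{proper} $X\subsetneq\C$ (Proposition~\ref{prop: main for X}). It then observes that any algebraic quotient $\beta\colon\Gamma_\C\twoheadrightarrow H$ comes from an equation with a \emph{finite} singular set $Y$, so one may choose $x_0\notin Y$ \emph{depending on~$\beta$}; hence $\beta$ factors through $\Gamma_{\C\smallsetminus\{x_0\}}\cong\Gamma(X)$ with $|X|=|\C|$. With this floating base point the embedding-problem criterion (Lemma~\ref{lemma: step for main theorem}, Theorem~\ref{theo: characterize freeness through embedding problems}) yields the conclusion. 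A second, smaller point you gloss over is the existence of a \emph{globally compatible} choice of canonical loops across all finite $S$: the transition map $\pi_1(\C\smallsetminus S',x_0)\to\pi_1(\C\smallsetminus S,x_0)$ sends a canonical generator at $p$ to \emph{some} canonical generator at $p$, not a prescribed one, and the paper uses a compactness argument on the finite sets of such choices (Lemma~\ref{lemma: projective systems isomorphic}) to produce a coherent family.
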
 

At first glance, it might seem that 5 should follow from 4 rather immediately. However, in general, the projective limit of free profinite groups need not be a free profinite group (\cite[Ex.~9.1.14]{RibesZalesskii:ProfiniteGroups}). The question, when a projective limit of free profinite groups is itself free has attracted some attention but does not seem to be fully understood (\cite[Thm. 3.5.15 and Open Question~9.5.2]{RibesZalesskii:ProfiniteGroups}).

To get from 4 to 5 in the proof of Douady's theorem (see \cite[Sec.~3.4]{Szamuely:GaloisGroupsAndFundamentalGroups} or \cite{Douady:DeterminationDunGroupeDeGalois} for the original reference) one uses a compactness argument and that $\pi_1(\mathbb{P}^1(\C)\smallsetminus S,x_0)$ has more or less canonical generators. 
Another ingredient of the proof is that in the free profinite group on a set with $r$ elements, any subset of $r$ topological generators is a basis. As we will show (Example~\ref{ex: counter example}), the corresponding statement fails for free proalgebraic groups. Therefore, genuinely new ideas are needed in the differential case. In particular, we will use a characterization of free proalgebraic groups in terms of embedding problems.

For a finite subset $X$ of $\C$, it is an immediate consequence of the Riemann-Hilbert correspondence, that the differential Galois group of the family of all regular singular differential equations with singularities in $X\cup \{\infty\}$, is the free proalgebraic group on a set of cardinality $|X|$. We generalize this result from finite subsets of $\C$ to arbitrary subsets of $\C$. Indeed, the case $X=\C$ is exactly Theorem~A.

We note that there is also an analogy between differential Galois theory over $\C(x)$ and classical Galois theory over $k(x)$, with $k$ an algebraically closed field of characteristic $p>0$, such that regular singular points correspond to tamely ramified points, while irregular singular points correspond to wildly ramified points (\cite[Section 11.6]{SingerPut:differential}). Based on this analogy, our result may seem more surprising, because the Galois group of the maximal tamely ramified
extension of $k(x)$ with branch locus in a fixed subset $S$ of $\mathbb{P}^1(k)$ is not a free profinite group.

\medskip

We conclude the introduction with an outline of the article. In Section 2 we recall the definition of free proalgebraic groups and the required results concerning differential Galois theory and the Riemann-Hilbert correspondence.
We then study projetive systems of abstract free groups in Section~3. Finally, in the last section the previous results are applied to prove Theorem A. 

\medskip

The author is grateful to David Harbater and Michael Singer for helpful comments.

\section{Preliminaries and Notation}

In this preparatory section we recall the basic definitions and results concerning regular singular differential equations and differential Galois theory. We also review the definition of free proalgebraic groups.

We use ``$=$'' or ``$\simeq$'' to denote canonical isomorphisms and ``$\cong$'' to denote isomorphisms. (As the implied isomorphism in Theorem A is not canonical, it seems worthwhile to make this distinction.)

\subsection{Free proalgebraic groups}

Throughout this article we work over the field $\C$ of complex numbers. We use the term ``algebraic group'' in lieu of ``affine group scheme of finite type over $\C$''. Similarly, a ``proalgebraic group'' is an ``affine group scheme over $\C$''. By a \emph{closed subgroup} of a proalgebraic group, we mean a closed subgroup scheme. Following \cite[Def. 5.5]{Milne:AlgebraicGroupsTheTheoryOfGroupSchemesOfFiniteTypeOverAField} a morphism $G\to H$ of proalgebraic groups is called a \emph{quotient map} if it is faithfully flat. We use $G\twoheadrightarrow H$ to indicate quotient maps.

We begin by recalling the definition of free proalgebraic groups from \cite{Wibmer:FreeProalgebraicGroups}. Let $\Gamma$ be a proalgebraic group and let $X$ be a set. A map $\varphi\colon X\to \Gamma(\C)$ \emph{converges to $1$} if almost all elements of $X$ map to $1$ in any algebraic quotient of $\Gamma$, i.e., for every algebraic group $G$ and every quotient map $\f\colon \Gamma\twoheadrightarrow G$, all but finitely many elements of $X$ map to $1$ under $X\xrightarrow{\varphi}\Gamma(\C)\xrightarrow{\f_\C}G(\C)$.

The following definition is the special case of \cite[Def. 2.18]{Wibmer:FreeProalgebraicGroups}, where $\mathcal{C}$ is the formation of all algebraic groups and $R=k=\C$.

\begin{defi} \label{defi:free proalgebraic group}
	Let $X$ be a set. A proalgebraic group $\Gamma(X)$ together with a map $\iota\colon X\to \Gamma(X)(\C)$ converging to one is called a \emph{free proalgebraic group} on $X$ if $\iota$ satisfies the following universal property. For every proalgebraic group $G$ and every map $\varphi\colon X\to G$ converging to $1$, there exists a unique morphism $\f\colon \Gamma(X)\to G$ of proalgebraic groups such that
	$$\xymatrix{
	X \ar^-\iota[rr] \ar_-\varphi[rd]& & \Gamma(X)(\C) \ar^-{\f_\C}[ld] \\
	& G(\C) &	
	}$$
commutes.
\end{defi}
As the pair $(\iota,\Gamma(X))$ is unique up to a unique isomorphism, we will usually speak of \emph{the} free proalgebraic group $\Gamma(X)$ on $X$.

The map $\iota\colon X\to \Gamma(X)(\C)$ is injective.  In fact, the induced map $F(X)\to \Gamma(X)(\C)$ from the (abstract) free group $F(X)$ on $X$ to $\Gamma(X)(\C)$ is injective (\cite[Lem. 1.6]{Wibmer:SubgroupsOfFreeProalgebraicGroupsAndMatzatsconjecture}). We will therefore in the sequel identify $X$ with a subset of $\Gamma(X)(\C)$ via $\iota$.

\begin{rem} \label{rem: suffices universal for algebraic}
	To verify that $\Gamma(X)$ is the free proalgebraic group on $X$, it suffices to verify the universal property of Definition \ref{defi:free proalgebraic group} for $G$ algebraic (\cite[Rem. 2.19]{Wibmer:FreeProalgebraicGroups}).
\end{rem}

For a proalgebraic group $G$ and a subset $X$ of $G(\C)$, we denote the smallest closed subgroup $H$ of $G$ such that $X\subseteq H(\C)$ with $\langle X\rangle$. In other words, $\langle X\rangle$ is the closed subgroup of $G$ generated by $X$. By \cite[Thm. 2.17]{Wibmer:FreeProalgebraicGroups} we have $\Gamma(X)=\langle X\rangle$.

\medskip

We next explain why the proof of Douady's theorem (as presented in \cite{Douady:DeterminationDunGroupeDeGalois} or \cite[Sec.~3.4]{Szamuely:GaloisGroupsAndFundamentalGroups}) does not have a direct differential analog. This proof uses a result (\cite[Prop.~1]{Douady:DeterminationDunGroupeDeGalois} or \cite[Lem.~3.4.11]{Szamuely:GaloisGroupsAndFundamentalGroups}) attributed to Serre by Douady, stating that, any set of $n$ elements that topologically generates a free profinite group of rank $n$ is a basis. Equivalently, a surjective endomorphism of a free profinite group of finite rank is an isomorphism. The proof of this result uses a counting argument and therefore does not apply in our context, where finite groups are replaced by algebraic groups.
In fact, as shown in the following example, Serre's result does not hold in our context.
%

\begin{ex} \label{ex: counter example}
	Let $X=\{*\}$ be a set with one element $*$. We will show that not every generator of $\Gamma(X)$ is a basis, i.e., there exists a quotient map $\Gamma(X)\to\Gamma(X)$ that is not an isomorphism.

	The free proalgebraic on one element is of the form $\Gamma(X)=\Ga\times D(\C^\times)$. See \cite[Ex.~2.22]{Wibmer:FreeProalgebraicGroups} or \cite[Cor.~16.26]{Sauloy:DifferentialGaloisTheoryThroughRiemannHilbertCorrespondence}. Here $\Ga$ is the additive group and, as in \cite[Ch. IV, \S1, Sec. 1]{DemazureGabriel:GroupesAlgebriques}, for any abelian group $M$, $D(M)$ denotes the diagonalizable proalgebraic group with character group $M$, i.e., $D(M)(T)=\Hom(M,T^\times)$ for any $\C$-algebra $T$. The map $\iota\colon X\to \Gamma(X)(\C)$ is given by $\iota(*)=(1,\id)\in \C\times \Hom(\C^\times,\C^\times)=\Ga(\C)\times D(\C^\times)(\C)$.
	
	As an abelian group, $\C^\times$ is isomorphic to $(\mathbb{Q}/\mathbb{Z})\oplus V$, where $V$ is a $\mathbb{Q}$-vector space of dimension $|\C|$. In particular, there exists an injective endomorphism $\psi \colon \C^\times\to \C^\times$ that is not an isomorphism. Dualizing $\psi$, we find a quotient map $D(\C^\times)\to D(\C^\times)$ that is not an isomorphism. This trivially extends to a quotient map $\Gamma(X)\to\Gamma(X)$ that is not an isomorphism.
%
%
%
%
%
%
%
\end{ex}

We will need the notion of \emph{proalgebraic completion} of an abstract group. See e.g., \cite{BassLubotzkyMagidMozes:TheProalgebraicCompletionOfRigidGroups}. Note that the proalgebraic completion is sometimes also referred to as the \emph{proalgebraic hull} (e.g., in \cite{Sauloy:DifferentialGaloisTheoryThroughRiemannHilbertCorrespondence}) or as the \emph{Hochschild-Mostow group} (in honor of \cite{HochschildMostow:RepresentationsAndRepresentativeFunctionsOfLieGroups}).

\begin{defi}
	Let $F$ be an (abstract) group. The \emph{proalgebraic completion} $F^\alg$ of $F$ is a proalgebraic group equipped with a morphism $F\to F^\alg(\C)$ of groups satisfying the following universal property: If $G$ is a proalgebraic and $F\to G(\C)$ is a morphism of groups, then there exists a unique morphism $\f\colon F^\alg\to G$ of proalgebraic groups such that 
	$$\xymatrix{
		F \ar[rr] \ar[rd]& & F^\alg(\C) \ar^-{\f_\C}[ld] \\
		& G(\C) &	
	}$$
commutes.
\end{defi}

For $X$ a finite (!) set and $F(X)$ the (abstract) free group on $X$, it follows from the universal properties that $F(X)^\alg\simeq \Gamma(X)$. The proalgebraic completion $F^\alg$ of $F$ can be constructed as the fundamental group of the neutral tannakian category of all finite dimensional $\C$-linear representations of $F$.

\subsection{Differential Galois theory} Introductions to this topic can be found in \cite{Magid:LecturesOnDifferentialGaloisTheory,SingerPut:differential},\cite{CrespoHajto:AlgebraicGroupsAndDifferentialGaloisTheory} and \cite{Sauloy:DifferentialGaloisTheoryThroughRiemannHilbertCorrespondence}. We recall the basic definitions and results, introducing our notation for the subsequent sections along the way.

 We fix a differential field $K$ with derivation $\de\colon K\to K$. We assume that the field of constants $K^\de=\{a\in K|\ \de(a)=a\}$ of $K$ is the field $\C$ of complex numbers. We are mainly interested in the case when $K=\C(x)$ is the rational function field in one variable $x$ and $\de=\frac{d}{dx}$. We consider a family $\mathcal{F}=(\de(y)=A_iy)_{i\in I}$
 of linear differential equations indexed by some set $I$, where $A_i\in K^{n_i\times n_i}$ is a square matrix for every $i\in I$.
 
 \begin{defi}
 	A differential field extension $L/K$ with $L^\de=\C$ is a \emph{Picard-Vessiot extension} for $\mathcal{F}$ if there exist matrices $Y_i\in\Gl_{n_i}(L)$ such that $\de(Y_i)=A_iY_i$ for $i\in I$ and $L$ is generated as a field extension of $K$ be all entries of all $Y_i$'s.
 \end{defi}

The $K$-subalgebra $R$ of $L$ generated by all of entries of all $Y_i$'s and $\frac{1}{\det(Y_i)}$'s is a $K$-$\de$-subalgebra of $L$ and called a \emph{Picard-Vessiot ring} for $\mathcal{F}$. 

For a given family $\mathcal{F}$, a Picard-Vessiot extension exists and is unique up to a $K$-$\de$-isomorphism. The \emph{differential Galois group} $G(L/K)$ of the Picard-Vessiot extension $L/K$, or of the family $\mathcal{F}$, is the functor $T\rightsquigarrow \Aut(R\otimes_\C T/K\otimes_\C T)$, from the category of $\C$-algebras to the category groups, where $T$ is considered as a constant differential ring and the automorphisms are required to commute with the derivation. The functor $G(L/K)$ can be represented by a $\C$-algebra, i.e., $G(L/K)$ is a proalgebraic group. 

A Picard-Vessiot extension $L/K$ is \emph{of finite type} if it is the Picard-Vessiot extension for a single differential equation. This is the case if and only of $G(L/K)$ is algebraic.

Since $L$ is the field of fractions of $R$, any $g\in G(L/K)(\C)$ extends uniquely to a $K$-$\de$-automorphism of $L$. For a closed subgroup $H$ of $G=G(L/K)$ we set $$L^H=\{a\in L|\ h(a)=a \ \forall \ h\in H(\C)\}.$$

\begin{theo}[The differential Galois correspondence] \label{theo: differential Galois correspondencee}
Let $L/K$ be a Picard-Vessiot extension. The assignment $M\mapsto G(L/M)$ defines an inclusion reversing bijection between the set of intermediate differential fields of $L/K$ and the set of closed subgroups of $G(L/K)$. The inverse is given by $H\mapsto L^H$. 

If $M$ corresponds to $H$ under this bijection, then $M/K$ is Picard-Vessiot if and only if $H$ is normal in $G(L/K)$. Moreover, if this is the case, the restriction morphism $G(L/K)\to G(M/K)$ is a quotient map with kernel $G(L/M)$. In particular, $G(M/K)\simeq G(L/K)/G(L/M)$.
\end{theo}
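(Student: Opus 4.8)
The plan is to bootstrap from the classical finite-type differential Galois correspondence, which is due to Kolchin (see e.g.\ \cite[\S6.3]{SingerPut:differential}, \cite{Magid:LecturesOnDifferentialGaloisTheory} or \cite{CrespoHajto:AlgebraicGroupsAndDifferentialGaloisTheory}); since we work in characteristic zero, all affine group schemes of finite type over $\C$ are smooth, so there is no gap between the scheme-theoretic statement and the naive one. First I would record that an arbitrary Picard-Vessiot extension $L/M$ is the directed union of its finite-type Picard-Vessiot subextensions: for $\mathcal F=(\de(y)=A_iy)_{i\in I}$ the subfield $L_J\subseteq L$ generated over $M$ by the entries of the $Y_i$ with $i$ in a finite subset $J\subseteq I$ is Picard-Vessiot for the single equation $\de(y)=\bigl(\bigoplus_{i\in J}A_i\bigr)y$, and $L=\bigcup_J L_J$. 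The Picard-Vessiot rings satisfy $R=\bigcup_J R_J$, each $R_J$ is stable under $M$-$\de$-automorphisms of $R$, and hence $G(L/M)=\varprojlim_J G(L_J/M)$ as a cofiltered limit of algebraic groups in which every transition and projection map is a quotient map (on coordinate rings this is the corresponding filtered colimit of faithfully flat inclusions). Dually, since $\mathcal O(G(L/M))=\varinjlim_J \mathcal O(G(L_J/M))$, every closed subgroup $H$ of $G(L/M)$ is $\varprojlim_J H_J$, where $H_J$ denotes the scheme-theoretic image of $H$ in $G(L_J/M)$.

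Next I would set up the two maps. For an intermediate differential field $M$ of $L/K$ the extension $L/M$ is again Picard-Vessiot for $\mathcal F$ (the $Y_i$ still generate $L$ over $M$, and $M^\de=\C$ since $K\subseteq M\subseteq L$), so $M\mapsto G(L/M)$ is a well-defined, inclusion-reversing map into the closed subgroups of $G:=G(L/K)$. In the other direction, for a closed subgroup $H=\varprojlim_J H_J$ of $G$ (with $H_J\le G_J:=G(L_J/K)$) and $a\in L_J$, the element $a$ is fixed by $H(\C)$ iff it is fixed by $H_J(\C)$ (the image of $H(\C)$ in $G_J(\C)$ is Zariski dense in $H_J$, all groups being smooth over $\C$), so $L^H\cap L_J=L_J^{H_J}$ and $L^H=\bigcup_J L_J^{H_J}$ is an intermediate differential field. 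Now I would check that the two maps are mutually inverse by reducing each identity to a fixed $L_J$: from $L_J^{G_J}=K$ and the surjectivity of $G\twoheadrightarrow G_J$ on $\C$-points one gets $L^G\cap L_J=L_J^{G_J}=K$, hence $L^G=K$, and replacing $K$ throughout by an arbitrary intermediate $M$ yields $L^{G(L/M)}=M$; conversely, the scheme-theoretic image $H'_J$ of $G(L/L^H)$ in $G_J$ contains $H_J$ (because $H$ fixes $L^H$) and is contained in it (the image of $G(L/L^H)(\C)$ in $G_J(\C)$ fixes $L^H\cap L_J=L_J^{H_J}$, hence lies in $G(L_J/L_J^{H_J})(\C)=H_J(\C)$), so $G(L/L^H)=\varprojlim_J H'_J=\varprojlim_J H_J=H$. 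This establishes the order-reversing bijection.

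For the statement on normality: if $M/K$ is Picard-Vessiot, say for the family $(\de(y)=B_\alpha y)_\alpha$ with fundamental solution matrices $Z_\alpha\in\Gl_{m_\alpha}(M)$ generating $M$ over $K$, then for any $\C$-algebra $T$ and $g\in G(T)$ each $g(Z_\alpha)$ is again a fundamental solution matrix, so $g(Z_\alpha)=Z_\alpha C_\alpha$ with $C_\alpha\in\Gl_{m_\alpha}\bigl((L\otimes_\C T)^\de\bigr)=\Gl_{m_\alpha}(T)$; hence $g(M\otimes_\C T)=M\otimes_\C T$, and then $ghg^{-1}$ fixes $M\otimes_\C T$ pointwise for every $h\in G(L/M)(T)$, i.e.\ $G(L/M)\trianglelefteq G$. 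Conversely, if $H:=G(L/M)$ is normal in $G$, then each image $H_J$ is normal in $G_J$ (images of normal subgroups under quotient maps are normal), so $M\cap L_J=L^H\cap L_J=L_J^{H_J}$ is Picard-Vessiot over $K$ by the finite-type theorem, and therefore the directed union $M=\bigcup_J(M\cap L_J)$ is Picard-Vessiot over $K$ (for the union of the defining families). When $M/K$ is Picard-Vessiot, the restriction map $G\to G(M/K)$ is the cofiltered limit of the quotient maps $G(L_J/K)\to G((M\cap L_J)/K)$, hence itself a quotient map; its kernel consists precisely of the automorphisms fixing $M$, i.e.\ $G(L/M)$; and $G(M/K)\simeq G/G(L/M)$ follows.

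I expect the only substantive ingredient to be the finite-type differential Galois correspondence, which I would import from the literature; everything else is the bookkeeping of cofiltered limits. Accordingly, the step requiring the most care is the compatibility of ``taking fixed fields'' and ``taking differential Galois groups'' with the presentations $L=\bigcup_J L_J$ and $G=\varprojlim_J G_J$ --- concretely, the identities $L^H=\bigcup_J L_J^{H_J}$ and $G(L/L^H)=H$ in the second paragraph, where one must know that the density of $\C$-points (smoothness) lets one test fixed points on $\C$-points, and that scheme-theoretic images commute with the relevant limits and colimits.
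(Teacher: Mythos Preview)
The paper does not prove this theorem at all: it is recalled in the preliminaries section as a standard result of differential Galois theory, with the textbooks \cite{Magid:LecturesOnDifferentialGaloisTheory,SingerPut:differential,CrespoHajto:AlgebraicGroupsAndDifferentialGaloisTheory} cited just before for background, and is then used later as a black box. So there is no proof in the paper to compare your proposal against.

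That said, your proposal is the natural way to establish the statement in the generality the paper needs (families rather than a single equation): reduce to the classical finite-type correspondence by writing $L=\bigcup_J L_J$ and $G=\varprojlim_J G_J$ over finite subfamilies, and push the identities $L^G=K$, $G(L/L^H)=H$, and the normality statement through the limit. The one point I would flag for extra care is the repeated use of ``the image of $H(\C)$ in $G_J(\C)$ is Zariski dense in $H_J$'' and, more generally, that quotient maps of proalgebraic groups are surjective on $\C$-points. For a single quotient of algebraic groups over $\C$ this is immediate, but for a projective limit you are implicitly using that an inverse system of nonempty fibers (cosets of kernels) with surjective transition maps has nonempty limit; this is true here because the fibers are affine $\C$-varieties and one can argue via quasi-compactness of the constructible topology or via a Zorn/Mittag--Leffler argument on coordinate rings, but it deserves a sentence rather than being left implicit. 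Once that is in place, your bookkeeping is correct and the argument goes through.
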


An alternative definition of the differential Galois group of a family of linear differential equations can be given via the tannakian formalism (\cite{DeligneMilne:TannakianCategories, Deligne:categoriestannakien}). As a first step, one has to define the ``category of differential equations''. This is formalized through the notion of a differential module. A \emph{differential module} over $K$ is a finite dimensional $K$-vector space $M$ equipped with an additive map $\partial\colon M\to M$ such that $\partial(am)=\de(a)m+a\partial(m)$ for all $a\in K$ and $m\in M$. 
A morphism of differential modules over $K$ is a $K$-linear map that commutes with $\partial$.

To a linear differential equation $\de(y)=Ay$ with $A\in K^{n\times n}$, one associates the differential module $M_A$ by setting $M_A=K^n$ and $\partial(\xi)=\de(\xi)-A\xi$ for all $\xi\in K^n$. Conversely, if $(M,\partial)$ is a differential module with basis $\underline{e}=(e_1,\ldots,e_n)$, we can write $\partial(\underline{e})=\underline{e}(-A)$ for a unique matrix $A=A_{M,\underline{e}}\in K^{n\times n}$.
Then $\partial(\underline{e}\xi)=\underline{e}\de(\xi)+\partial(\underline{e})\xi=\underline{e}(\de(\xi)-A\xi)$ so that $M$ is isomorphic to $M_A$. Via the choice of the basis $\underline{e}$, we can thus associate to $M$ the differential equation $\de(y)=Ay$. A different choice of basis leads to a Gauge equivalent differential equation. 

For a differential module $(M,\partial)$ over $K$ one sets $M^\partial=\{m\in M|\ \partial(m)=0\}$. This is a $\C$\=/subspace of $M$. If $\underline{e}$ is a $K$-basis of $M$ and $A\in K^{n\times n}$ is such that $\partial(\underline{e}\xi)=\underline{e}(\de(\xi)-A\xi)$ for all $\xi\in K^n$, then $M^\partial$ can be identified with the $\C$-space of all solutions of $\de(y)=Ay$ in $K^n$. More generally, if $L$ is a differential field extension of $K$, then $M\otimes_K L$ is a differential module over $L$ and $M\otimes_KL$ identifies with the space of all solutions of $\de(y)=Ay$ in $L^n$.



 The category of all differential modules over $K$ is a neutral Tannakian category over $\C$. 
If $\mathcal{F}=(\de(y)=A_iy)_{i\in I}$ is a family of differential equations over $K$ and $\mathcal{M}=(M_{A_i})_{i\in I}$ is the corresponding family of differential modules, then the differential Galois group of $\mathcal{F}$ is isomorphic to the fundamental group of the neutral Tannakian category $\langle\langle \mathcal{M}\rangle\rangle$ generated by $\mathcal{M}$.

\subsection{Regular singular differential equations}
For background on regular singular differential equations and the Riemann-Hilbert correspondence see \cite[Part 3]{Sauloy:DifferentialGaloisTheoryThroughRiemannHilbertCorrespondence}, \cite[Chapters~5 and~~6]{SingerPut:differential}, \cite[Part I]{MitschiSauzin:DivergentSeriesSummabilityAndResurgenceI} and \cite[Part III]{AndreBaldassarriCailotte:DeRhamcohomologyOfDifferentialModules}.

\medskip

We first treat the local definitions. Consider a differential module $(M,\partial)$ over the field $\C((t))$ of formal Laurent series in $t$  equipped with the usual derivation $\de=\frac{d}{dt}$. Let $\C[[t]]\subseteq \C((t))$ be the differential subring of formal power series. A $\C[[t]]$\=/lattice in $M$ is a $\C[[t]]$\=/submodule $N$ of $M$ such that there exists a $\C[[t]]$-basis of $N$ that is also a $\C((t))$-basis of~$M$. 

The differential module $M$ is called \emph{regular} if there exists a $\C[[t]]$\=/lattice $N$ in $M$ such that $\partial(N)\subseteq N$. 
The differential module $M$ is called \emph{regular singular} if there exists a $\C[[t]]$\=/lattice $N$ in $M$ such that $t\partial(N)\subseteq N$. So a regular differential module is regular singular.

 
\medskip 
 
We now consider the global picture. Let $(M,\partial)$ be differential module over $\C(x)$. Here, as throughout the paper, the rational function field $\C(x)$ is considered as a differential field via the derivation $\de=\frac{d}{dx}$.

Let $\mathbb{P}^1(\C)=\C\cup\{\infty\}$ denote the Riemann sphere. For every point $p\in\mathbb{P}^1(\C)$ we have a ``local'' differential field 
$\C(x)_p=\C((t))$. For $p\in \C$ this is $\C(x)_p=\C((x-p))=\C((t))$, the field of formal Laurent series in $t=x-p$ with derivation $\frac{d}{dt}=\frac{d}{d(x-p)}$. For $p=\infty$ this is $\C(x)_p=\C((x^{-1}))=\C((t))$, the field of formal Laurent series in $t=x^{-1}$ with derivation $\frac{d}{dt}=\frac{d}{dx^{-1}}$. 

Note that $(\C(x),\frac{d}{dx})$ is a differential subfield of $(\C((x-p)),\frac{d}{d(x-p)})$ for $p\in\C$. Thus, we obtain a differential module $M_p=M\otimes_{\C(x)}\C(x)_p$ over $\C(x)_p$ for every $p\in \C$. 
For $p=\infty$ it is not true that $(\C(x),\frac{d}{dx})$ is a differential subfield of $(\C((x^{-1})),\frac{d}{dx^{-1}})$. However, $(\C(x),-x^{2}\frac{d}{dx})$ is a differential subfield of $(\C((x^{-1})),\frac{d}{dx^{-1}})$. So we can base change the differential module $(M,{-x^2}\partial)$ over $(\C(x),-x^{2}\frac{d}{dx})$ to a differential module $M_{\infty}$ over $\C(x)_\infty$. 
This awkwardness at infinity is one of the reasons why some authors prefer to work with connections, rather than differential modules. This way one can avoid the a priori choice of a derivation on $\C(x)$. However, since the module $\Omega_{\C(x)/\C}$ of differentials of $\C(x)$ over $\C$ is one dimensional, these two approaches are equivalent.

 A point $p\in \mathbb{P}^1(\C)$ is a \emph{singularity} of $M$ if the differential module $M_p$ over $\C((t))$ is not regular. 
  A point $p\in \mathbb{P}^1(\C)$ is \emph{regular singular} for $M$ if the differential module $M_p$ over $\C((t))$ is regular singular. Finally, $M$ is called \emph{regular singular} if every point $p\in\mathbb{P}^1(\C)$ is regular singular for $M$.

  Consider a differential equation $\de(y)=Ay$ with $A\in \C(x)^{n\times n}$. A point $p\in \mathbb{P}^1(\C)$ is a \emph{singularity} of $\de(y)=Ay$ if it is a singularity of the associated differential module $M_A$. Note that this definition is at odds with the common terminology, referring to the poles of $A$ as the singularities of $\de(y)=Ay$. If $p\in \C$ is a singularity of $\de(y)=Ay$, then $p$ must be a pole of $A$. However, the converse is not true. A pole of $A$ that is not a singularity of $\de(y)=Ay$ is sometimes called an apparent singularity. The differential equation $\de(y)=Ay$ is called \emph{regular singular} if the associated differential module $M_A$ is regular singular.

\medskip

  Fix a proper subset $X$ of $\C\subseteq \mathbb{P}^1(\C)$. As $X$ is assumed to be a proper subset of $\C$, we can choose a ``base point'' $x_0\in \C$ with $x_0\notin X$.  
     Let $\operatorname{RegSing}(\C(x),X)$ denote the category of all regular singular differential modules over $\C(x)$ with singularities contained in $X\cup\{\infty\}$. This is a Tannakian category over $\C$. A fibre functor $\omega_{X,x_0}\colon  \operatorname{RegSing}(\C(x),X)\to \operatorname{Vec}_\C$, with values in the category $\operatorname{Vec}_\C$ of finite dimensional $\C$-vector spaces, is given by $$\omega_{X,x_0}((M,\partial))=(M\otimes_{\C(x)}\mathcal{M}_{x_0})^\partial,$$ where $\mathcal{M}_{x_0}$ is the differential field of germs of meromorphic functions at $x_0$. We denote with $\underline{\operatorname{Aut}}^\otimes(\omega_{X,x_0})$ the proalgebraic group of tensor automorphisms of $\omega_{X,x_0}$.

Now assume that $X$ is finite. Let $\pi_1(\C\smallsetminus X, x_0)$ be the topological fundamental group of the Riemann sphere with the points $X\cup\{\infty\}$ removed, with base point $x_0$. Then the local solution space $(M\otimes_{\C(x)}\mathcal{M}_{x_0})^\partial$ is naturally equipped with the monodromy action of \mbox{$\pi_1(\C\smallsetminus X, x_0)$}. We denote with \mbox{$\Rep(\pi_1(\C\smallsetminus X, x_0))$} the category of finite dimensional $\C$-linear representations of \mbox{$\pi_1(\C\smallsetminus X,x_0)$}. The following theorem is sometimes referred to as the (global) Riemann-Hilbert correspondence. The essential surjectivity of the functor in the theorem is also known as the solution of the (weak form of the) Riemann-Hilbert problem.

     \begin{theo}[Riemann-Hilbert correspondence] \label{theo: RH}
     	Let $X\subseteq \C$ be finite. Then the functor $$\operatorname{RegSing}(\C(x),X)\to \Rep(\pi_1(\C\smallsetminus X,x_0)),\quad  (M,\partial)\rightsquigarrow  (M\otimes_{\C(x)}\mathcal{M}_{x_0})^\partial$$
     	is an equivalence of Tannakian categories.
     \end{theo}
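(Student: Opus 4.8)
The plan is to factor the functor through the analytic Riemann--Hilbert correspondence and then establish full faithfulness and essential surjectivity separately; since all the constructions involved respect tensor products, units and duals, this will yield an equivalence of Tannakian categories. Write $S:=X\cup\{\infty\}$ and $U:=\C\smallsetminus X=\mathbb{P}^1(\C)\smallsetminus S$, a connected Riemann surface with base point $x_0$. First I would make the functor explicit: given $(M,\partial)$ in $\operatorname{RegSing}(\C(x),X)$, analytification yields a holomorphic vector bundle with (necessarily flat) connection $(\mathcal{M}^{\mathrm{an}},\nabla)$ on $U$, and its sheaf $\ker\nabla$ of horizontal sections is a $\C$-local system on $U$ whose stalk at $x_0$ is canonically $(M\otimes_{\C(x)}\mathcal{M}_{x_0})^\partial=\omega_{X,x_0}(M)$, carrying the monodromy action of $\pi_1(\C\smallsetminus X,x_0)$. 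In this coordinate-free language the awkwardness at $\infty$ disappears, $\infty$ being just another point of $\mathbb{P}^1(\C)$.

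For full faithfulness I would pass to internal Homs. As a tensor functor between rigid tensor categories, our functor preserves $\underline{\Hom}$, and $\Hom(M,N)=\underline{\Hom}(M,N)^\partial$ maps to $\omega_{X,x_0}\big(\underline{\Hom}(M,N)\big)^{\pi_1(\C\smallsetminus X,x_0)}$, with $\underline{\Hom}(M,N)=M^\vee\otimes N$ again in $\operatorname{RegSing}(\C(x),X)$. It therefore suffices to show that for every $P\in\operatorname{RegSing}(\C(x),X)$ the natural map $P^\partial\to\omega_{X,x_0}(P)^{\pi_1(\C\smallsetminus X,x_0)}$ is bijective. An element of the right-hand side is a global horizontal holomorphic section $s$ of $\mathcal{P}^{\mathrm{an}}$ on $U$. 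Because $P$ is regular singular at each $p\in S$ — there is a $\C[[t]]$-lattice $N_p$ with $t\partial(N_p)\subseteq N_p$, equivalently $\nabla$ has, in a suitable basis, a simple pole at $p$ — horizontal sections have moderate growth at $p$, so $s$ extends meromorphically across every point of $S$; being a meromorphic section on the compact curve $\mathbb{P}^1(\C)$ it is rational by GAGA, and horizontal by analytic continuation from a neighbourhood of $x_0$, hence $s\in P^\partial$. Injectivity is clear, since a nonzero rational horizontal section does not vanish analytically on the nonempty open set $U$.

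Essential surjectivity is the Riemann--Hilbert problem in its weak form and carries the bulk of the work. Given $\rho\colon\pi_1(\C\smallsetminus X,x_0)\to\Gl(V)$, I would form the associated $\C$-local system $\mathcal{L}$ on $U$ together with the holomorphic connection $(\mathcal{L}\otimes_\C\mathcal{O}_U,\operatorname{id}\otimes d)$, and then invoke Deligne's canonical extension: near each $p\in S$, choosing the logarithm of the local monodromy with eigenvalues in the strip $\{z\in\C\mid 0\le\operatorname{Re}(z)<1\}$ glues the connection to a holomorphic vector bundle $\overline{\mathcal{V}}$ on $\mathbb{P}^1(\C)$ carrying a connection $\overline{\nabla}\colon\overline{\mathcal{V}}\to\overline{\mathcal{V}}\otimes\Omega^1_{\mathbb{P}^1(\C)}(\log S)$ with logarithmic poles along $S$. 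By GAGA, $(\overline{\mathcal{V}},\overline{\nabla})$ is the analytification of an algebraic vector bundle with connection on $\mathbb{P}^1_\C$ having at worst logarithmic poles along $S$; its generic fibre (tensoring up to $\C(x)$) is a differential module $(M,\partial)$ over $\C(x)$. Since a logarithmic pole is a simple pole in a suitable basis, which exhibits a $t$-stable lattice, $M$ is regular singular with singularities inside $S$, and by construction the monodromy of $M$, read on the stalk at $x_0$, is $\rho$, so $\omega_{X,x_0}(M)\cong V$ as $\pi_1(\C\smallsetminus X,x_0)$-representations.

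I expect the main obstacle to be precisely this last step: extracting from $\rho$ an \emph{algebraic} regular singular connection on all of $\mathbb{P}^1_\C$ — verifying that Deligne's local extensions glue to a genuine vector bundle on the \emph{compact} $\mathbb{P}^1(\C)$, descending from the analytic to the algebraic category via GAGA, and checking the local comparison ``logarithmic pole $\Leftrightarrow$ regular singular in the lattice sense'' (Fuchs' criterion) so that the output really lies in $\operatorname{RegSing}(\C(x),X)$. Full faithfulness, by contrast, reduces through internal Homs to the soft assertion that a holomorphic horizontal section on $U$ with moderate growth at $S$ is rational. It is worth noting that only the weak form of the Riemann--Hilbert problem is needed here, so the subtleties surrounding Bolibruch's counterexamples to the strong (Fuchsian-system) form do not arise.
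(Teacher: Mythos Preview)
The paper does not supply its own proof of this theorem; it is quoted as a classical background result (with the reader directed to the references listed at the start of the subsection on regular singular differential equations) and then used as a black box to obtain Corollary~\ref{cor: RH finite}. There is therefore nothing in the paper to compare your argument against.

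That said, your outline is correct and is exactly the standard proof one finds in those references, going back to Deligne: full faithfulness is reduced via internal $\underline{\Hom}$'s to showing that a monodromy-invariant local horizontal section of a regular singular module extends---by moderate growth at the singular points and GAGA on $\mathbb{P}^1$---to a rational horizontal section; essential surjectivity is obtained from Deligne's canonical extension of the associated holomorphic connection across $S$ (eigenvalues of the residue normalized to a fundamental strip), followed by GAGA to algebraize and passage to the generic fibre. Your identification of the delicate point---gluing the local logarithmic extensions to a global bundle on the compact $\mathbb{P}^1$ and invoking GAGA, together with Fuchs' criterion to land in $\operatorname{RegSing}(\C(x),X)$---is accurate, as is your remark that only the weak Riemann--Hilbert problem is needed, so Bolibruch-type obstructions are irrelevant here.
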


From Theorem \ref{theo: RH} we immediately obtain:
 
 \begin{cor} \label{cor: RH finite}
 		For $X\subseteq \C$ finite we have
 	$\underline{\operatorname{Aut}}^{\otimes}(\omega_{X,x_0})\simeq \pi_1(\C\smallsetminus X,x_0)^\alg$. In particular, the differential Galois group of the family of all regular singular differential equations over $\C(x)$ with singularities in $X\cup\{\infty\}$ is isomorphic to the free proalgebraic group on $X$.
 \end{cor}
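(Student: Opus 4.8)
\textit{Proof strategy.} The plan is to deduce the corollary formally from the Riemann-Hilbert correspondence of Theorem~\ref{theo: RH}, together with the tannakian description of the differential Galois group and the construction of the proalgebraic completion recalled above. The first step is to note that an equivalence of neutral Tannakian categories over $\C$ which respects the chosen fibre functors induces an isomorphism of the associated fundamental proalgebraic groups. The equivalence of Theorem~\ref{theo: RH} sends a regular singular module $(M,\partial)$ to its local solution space $(M\otimes_{\C(x)}\mathcal{M}_{x_0})^\partial$ equipped with the monodromy action of $\pi_1(\C\smallsetminus X,x_0)$; in particular it carries the fibre functor $\omega_{X,x_0}$ to the forgetful fibre functor on $\Rep(\pi_1(\C\smallsetminus X,x_0))$. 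Hence $\underline{\operatorname{Aut}}^\otimes(\omega_{X,x_0})$ is isomorphic to the proalgebraic group of tensor automorphisms of the forgetful functor on $\Rep(\pi_1(\C\smallsetminus X,x_0))$, which by construction is the proalgebraic completion $\pi_1(\C\smallsetminus X,x_0)^\alg$; this proves the first assertion.

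Next I would identify $\pi_1(\C\smallsetminus X,x_0)$. Since $\C\smallsetminus X=\mathbb{P}^1(\C)\smallsetminus(X\cup\{\infty\})$ is the Riemann sphere with the $|X|+1$ points of $X\cup\{\infty\}$ removed, it is homotopy equivalent to a wedge of $|X|$ circles, and taking the standard loops $\gamma_p$ around the points $p\in X$ as generators identifies $\pi_1(\C\smallsetminus X,x_0)$ with the abstract free group $F(X)$ on the finite set $X$. Passing to proalgebraic completions and invoking $F(X)^\alg\simeq\Gamma(X)$, valid here because $X$ is finite (recalled above), we obtain $\underline{\operatorname{Aut}}^\otimes(\omega_{X,x_0})\cong\Gamma(X)$, the free proalgebraic group on $X$.

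For the ``in particular'' statement, observe that the family $\mathcal{F}$ of all regular singular differential equations over $\C(x)$ with singularities in $X\cup\{\infty\}$ corresponds, via the associated differential modules, to the family of all objects of $\operatorname{RegSing}(\C(x),X)$, so the Tannakian category it generates is $\operatorname{RegSing}(\C(x),X)$ itself and its differential Galois group is the fundamental group of this category. As $\C$ is algebraically closed, any two fibre functors on this category are (non-canonically) isomorphic, so this fundamental group is isomorphic to $\underline{\operatorname{Aut}}^\otimes(\omega_{X,x_0})$, and the previous steps finish the proof. The argument is essentially bookkeeping, the substantive input being Theorem~\ref{theo: RH}; the one point that deserves a moment's attention is the compatibility used in the first step, namely that the equivalence of Theorem~\ref{theo: RH} intertwines $\omega_{X,x_0}$ with the forgetful functor, which holds because the monodromy representation attached to $(M,\partial)$ is carried by the very space $(M\otimes_{\C(x)}\mathcal{M}_{x_0})^\partial$ on which $\omega_{X,x_0}$ is defined.
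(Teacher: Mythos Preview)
Your proof is correct and follows essentially the same approach as the paper: both derive the isomorphism $\underline{\operatorname{Aut}}^{\otimes}(\omega_{X,x_0})\simeq \pi_1(\C\smallsetminus X,x_0)^\alg$ by noting that the Riemann--Hilbert equivalence of Theorem~\ref{theo: RH} intertwines $\omega_{X,x_0}$ with the forgetful functor on $\Rep(\pi_1(\C\smallsetminus X,x_0))$, whose tensor automorphism group is by construction $\pi_1(\C\smallsetminus X,x_0)^\alg$, and then conclude via the freeness of $\pi_1(\C\smallsetminus X,x_0)$ on $|X|$ generators together with $F(X)^\alg\simeq\Gamma(X)$ for finite $X$. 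Your write-up is slightly more explicit about the compatibility of fibre functors and the identification of the differential Galois group with the Tannakian fundamental group, but the underlying argument is identical.
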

\begin{proof}
	As $\pi_1(\C\smallsetminus X,x_0)^\alg$ is the proalgebraic group of tensor automorphisms of the forgetful functor $\Rep(\pi_1(\C\smallsetminus X,x_0))\to \operatorname{Vec}_\C$, Theorem \ref{theo: RH} yields an isomorphism $\pi_1(\C\smallsetminus X,x_0)^\alg\to \underline{\operatorname{Aut}}^{\otimes}(\omega_{X,x_0})$. 
	The last statement follows because the differential Galois group of the family of all regular singular differential equations with singularities in $X\cup\{\infty\}$ is isomorphic to $\underline{\operatorname{Aut}}^{\otimes}(\omega_{X,x_0})$ and the group $\pi_1(\C\smallsetminus X,x_0)$ is free on $|X|$ generators.
\end{proof}
 
 We stress the fact that the morphism of groups $\pi_1(\C\smallsetminus X,x_0)\to \underline{\operatorname{Aut}}^{\otimes}(\omega_{X,x_0})(\C)$ is canonical. 
 If $X'$ is a subset of the finite set $X\subseteq \C$, then $\operatorname{RegSing}(\C(x),X')$ is a subcategory of $\operatorname{RegSing}(\C(x),X)$ and $\omega_{X',x_0}$ is the restriction of $\omega_{X,x_0}$ to $\operatorname{RegSing}(\C(x),X')$. We thus have a morphism $\underline{\operatorname{Aut}}^{\otimes}(\omega_{X,x_0})\to \underline{\operatorname{Aut}}^{\otimes}(\omega_{X',x_0})$ of proalgebraic groups. As $\C\smallsetminus X\subseteq \C\smallsetminus X'$, we also have a morphism $\pi_1(\C\smallsetminus X, x_0)\to \pi_1(\C\smallsetminus X', x_0)$ of groups. The diagram
 $$
 \xymatrix{
 \pi_1(\C\smallsetminus X,x_0) \ar[d] \ar[r] &	\underline{\operatorname{Aut}}^{\otimes}(\omega_{X,x_0})(\C) \ar[d] \\
 \pi_1(\C\smallsetminus X',x_0) \ar[r] &	\underline{\operatorname{Aut}}^{\otimes}(\omega_{X',x_0})(\C) 
 	 } 
 $$
 commutes and so also the diagram

  \begin{equation} \label{eq: can isom between systems}
 \xymatrix{
 	\pi_1(\C\smallsetminus X,x_0)^\alg \ar[d] \ar^-{\simeq}[r] &	\underline{\operatorname{Aut}}^{\otimes}(\omega_{X,x_0}) \ar[d] \\
 	\pi_1(\C\smallsetminus X',x_0)^\alg \ar^-{\simeq}[r] &	\underline{\operatorname{Aut}}^{\otimes}(\omega_{X',x_0})
 } 
 \end{equation}
 commutes.
 
 The family of isomorphisms $\pi_1(\C\smallsetminus X,x_0)^\alg\simeq 	\underline{\operatorname{Aut}}^{\otimes}(\omega_{X,x_0})$, one for every finite subset $X$ of $\C$ not containing $x_0$, can thus be seen as defining an isomorphism between two projective systems of proalgebraic groups. The projective limit on the right hand side is $\varprojlim_{X}\underline{\operatorname{Aut}}^{\otimes}(\omega_{X,x_0})=\underline{\operatorname{Aut}}^{\otimes}(\omega_{\C\smallsetminus\{x_0\},x_0})$, whereas the projective limit $\varprojlim_{X}\pi_1(\C\smallsetminus X,x_0)^\alg$ on the left hand side is a projective limit of free proalgebraic groups. The following section provides the necessary tools to show that this limit itself is free.

\section{Projective systems of free groups}
\label{sec: Projective systems}

Let $X$ be a set and consider the directed set $\mathcal{Y}$ of all finite subsets of $X$ ordered by inclusion. For $Y\in\mathcal{Y}$ let $F(Y)$ denote the (abstract) free group on $Y$ and for $Y\subseteq Y'$, define a map $\varphi_{Y,Y'}\colon F(Y')\to F(Y)$
by
$$\varphi_{Y,Y'}(y')=
\begin{cases}
y' &  \text{ if } y'\in Y, \\
1 &  \text{ if } y'\notin Y.
\end{cases}
$$
The projective limit $\varprojlim_{Y\in \mathcal{Y}}F(Y)$ (in the category of groups) of the projective system 
$(F(Y)_{Y\in \Y}, (\varphi_{Y,Y'})_{Y'\supseteq Y})$ is in general not a free group: We have a map $\varphi\colon X\to \varprojlim_{Y\in \mathcal{Y}}F(Y),\ x\mapsto (\varphi(x)_Y)_{Y\in\mathcal{Y}}$ given by $$\varphi(x)_Y=\begin{cases}
x &  \text{ if } x\in Y, \\
1 &  \text{ if } x\notin Y
\end{cases}$$
and the induced map $F(X)\to  \varprojlim_{Y\in \mathcal{Y}}F(Y)$ is injective. However, this map need not be surjective. Intuitively, surjectivity fails because $F(X)$ only contains words of finite length, while  $\varprojlim_{Y\in \mathcal{Y}}F(Y)$ may contain words of infinite length. For example, if $X=\{x_1,x_2,\ldots\}$ is countably infinite, write $Y=\{x_{i_{Y,1}},x_{i_{Y,2}},\ldots,x_{i_{Y,|Y|}}\}$ with $i_{Y,1}<i_{Y,2},\ldots<i_{Y,|Y|}$. Then $(x_{i_{Y,1}}x_{i_{Y,2}}\ldots x_{i_{Y,|Y|}})_{Y\in\mathcal{Y}}$ lies in $\varprojlim_{Y\in \mathcal{Y}}F(Y)$ but not in the image of $F(X)$. 
In the nomenclature of \cite{Higman:UnrestrictedFreeProducts} this limit is an \emph{unrestricted free product}. For more on projective limits of abstracts free groups see \cite{ConnerKent:InverseLimitsOfFiniteRankFreeGroups} and \cite{EdaNakamura:TheClassificationOfTheInverseLimitsOfSequencesOfFreeGroupsOfFiniteRank}.

However, when working with free profinite groups instead of abstract free groups, the above construction leads to a free profinite group. See \cite[Cor. 3.3.10 b)]{RibesZalesskii:ProfiniteGroups} or \cite[Lem. 3.4.10]{Szamuely:GaloisGroupsAndFundamentalGroups}. As we shall now explain, also in the case of free proalgebraic groups, the above construction leads to a free proalgebraic group.

As above, let $X$ be a set and let $\Y$ be the directed set of all finite subsets of $X$. For $Y\subseteq Y'$, by the universal property of $\Gamma(Y')$ (Definition \ref{defi:free proalgebraic group}), the map $\varphi_{Y,Y'}\colon Y'\to \Gamma(Y)(\C)$
defined by
$$\varphi_{Y,Y'}(y')=
\begin{cases}
y' &  \text{ if } y'\in Y, \\
1 &  \text{ if } y'\notin Y,
\end{cases}
$$
extends to a morphism $\varphi_{Y,Y'}\colon \Gamma(Y')\to\Gamma(Y)$. Then $((\Gamma(Y))_{Y\in \Y}, (\varphi_{Y,Y'})_{Y'\supseteq Y})$ is a projective system of free proalgebraic groups on finite sets. The projective limit is $\Gamma(X)$.

\begin{lemma} \label{lemma: canonical limit}
	$\varprojlim_{Y\in \mathcal{Y}}\Gamma(Y)=\Gamma(X)$.
\end{lemma}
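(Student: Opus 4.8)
The plan is to verify that $\varprojlim_{Y\in\mathcal{Y}}\Gamma(Y)$, together with the natural map from $X$, satisfies the universal property of Definition \ref{defi:free proalgebraic group}; by uniqueness of the free proalgebraic group this will force $\varprojlim_{Y\in\mathcal{Y}}\Gamma(Y)=\Gamma(X)$. By Remark \ref{rem: suffices universal for algebraic} it suffices to check the universal property against \emph{algebraic} groups $G$, and this finiteness is exactly what will make everything work.

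First I would set up the candidate map. Write $\Lambda=\varprojlim_{Y\in\mathcal{Y}}\Gamma(Y)$ with projection maps $\pi_Y\colon\Lambda\to\Gamma(Y)$. For $x\in X$, the element $x$ lies in $\Gamma(\{x\})(\C)$, and more generally in $\Gamma(Y)(\C)$ for every $Y\ni x$ (via $\iota$); for $Y\not\ni x$ define its image to be $1$. One checks this family is compatible with the $\varphi_{Y,Y'}$ — it is, by the very definition of $\varphi_{Y,Y'}$ — hence defines an element $\iota(x)\in\Lambda(\C)$, giving a map $\iota\colon X\to\Lambda(\C)$. Next I would check that $\iota$ converges to $1$: given a quotient map $\f\colon\Lambda\twoheadrightarrow G$ with $G$ algebraic, $\f$ factors through some $\pi_Y$ (since $G$ is algebraic, the kernel of $\Lambda\to G$ is open, i.e.\ contains $\ker\pi_Y$ for some finite $Y$ — this is the standard fact that a morphism from a projective limit of proalgebraic groups to an algebraic group factors through a finite stage); for $x\notin Y$ we have $\iota(x)_Y=1$, so all but finitely many $x$ map to $1$ in $G$.

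Now for the universal property itself: let $G$ be algebraic and $\varphi\colon X\to G$ a map converging to $1$. I want a unique morphism $\Phi\colon\Lambda\to G$ with $\Phi_\C\circ\iota=\varphi$. For existence: since $\varphi$ converges to $1$, the set $Y_0=\{x\in X\mid\varphi(x)\neq 1\}$ is \emph{finite} (apply the convergence condition to the identity quotient $G\twoheadrightarrow G$), so $Y_0\in\mathcal{Y}$. Restricting $\varphi$ to $Y_0$ and extending by $1$ on $\Gamma(Y_0)$ gives, by the universal property of $\Gamma(Y_0)$, a morphism $\psi\colon\Gamma(Y_0)\to G$; then set $\Phi=\psi\circ\pi_{Y_0}\colon\Lambda\to\Gamma(Y_0)\to G$. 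A direct check on the generating set $X$ shows $\Phi_\C\circ\iota=\varphi$. For uniqueness: since $\Gamma(X)=\langle X\rangle$ inside itself (by \cite[Thm.~2.17]{Wibmer:FreeProalgebraicGroups}), it would be cleanest to argue that $\iota(X)$ generates $\Lambda$ as a closed subgroup — then any two morphisms agreeing on $\iota(X)(\C)$ agree. Alternatively, and perhaps more robustly, I would observe that the identification of $\Lambda$ with $\Gamma(X)$ is what we are proving, so instead I would deduce uniqueness from the factorization: any morphism $\Lambda\to G$ with $G$ algebraic factors through some finite stage $\Gamma(Y)$ with $Y\supseteq Y_0$, and on $\Gamma(Y)$ the morphism is determined by its values on $Y$ by the universal property of $\Gamma(Y)$; these values are prescribed by $\varphi$ on $Y_0$ and must be $1$ elsewhere (since $\iota(x)_Y$ maps to $1$ through the quotient $\Gamma(Y)\to\Gamma(Y_0)$ for $x\in Y\setminus Y_0$... here one uses $\varphi_{Y_0,Y}$), forcing the morphism to equal $\Phi$.

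The main obstacle I expect is the clean justification that morphisms \emph{out of} the projective limit $\Lambda$ into an algebraic group factor through a finite stage. This is the linchpin of both the convergence check and the existence/uniqueness arguments. It should follow from the general fact that for a filtered projective limit $\Lambda=\varprojlim\Gamma(Y)$ of proalgebraic groups with surjective transition maps, the coordinate ring $\mathcal{O}(\Lambda)$ is the filtered union (colimit) of the $\mathcal{O}(\Gamma(Y))$, so any quotient map $\Lambda\twoheadrightarrow G$ to an algebraic group — whose coordinate ring is finitely generated — has image of $\mathcal{O}(G)$ landing inside some $\mathcal{O}(\Gamma(Y))$; but one must make sure the transition maps $\varphi_{Y,Y'}$ really are quotient maps (faithfully flat), which holds because each is split by the inclusion $Y\hookrightarrow Y'$ inducing $\Gamma(Y)\hookrightarrow\Gamma(Y')$, a section of $\varphi_{Y,Y'}$, so $\varphi_{Y,Y'}$ is a quotient map. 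With that in hand the rest is bookkeeping with universal properties.
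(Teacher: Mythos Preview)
Your proposal is correct and follows essentially the same route as the paper: define $\iota$ componentwise, verify convergence to $1$ via factorization of algebraic quotients through finite stages, produce $\Phi$ from the finite support set $Y_0$, and deduce uniqueness from generation. The one place where the paper is tidier is the uniqueness step: rather than your alternative of re-factoring an arbitrary competitor $\Phi'$ through some finite stage $\Gamma(Y)$, the paper simply proves $\langle\iota(X)\rangle=\varprojlim_Y\Gamma(Y)$ directly, observing that each projection $\langle\iota(X)\rangle\to\Gamma(Y)$ is a quotient map (because $\Gamma(Y)=\langle Y\rangle$) and hence so is the induced map into the limit --- this is precisely the ``cleanest'' argument you flagged but did not carry out.
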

\begin{proof}
	For $Y\in\Y$ we define $\iota_Y\colon X\to \Gamma(Y)(\C)$ by $\iota_Y(x)=\begin{cases} x \text{ if } x\in Y, \\
	1 \text{ if } x\notin Y. \end{cases}$ Then $$\iota\colon X\to \varprojlim_{Y\in \mathcal{Y}}\Gamma(Y)(\C)=\Big(\varprojlim_{Y\in \mathcal{Y}}\Gamma(Y)\Big)(\C), \quad x\mapsto  (\iota_Y(x))_{Y\in\Y}$$
	converges to $1$ because every quotient map $\varprojlim_{Y\in \mathcal{Y}}\Gamma(Y)\twoheadrightarrow G$ to an algebraic group $G$ factors through some $\Gamma(Y)$.
	
	To verify the universal property of $\iota$, according to Remark \ref{rem: suffices universal for algebraic}, it suffices to consider a map $\varphi\colon X\to G(\C)$ converging to $1$, with $G$ an algebraic group. Then $Y_0=\{x\in X|\ \varphi(x)\neq 1\}$ is finite. Define a morphism $\psi\colon \Gamma(Y_0)\to G$ by $\psi(y)=\varphi(y)$ for $y\in Y_0$ and let $\f$ be the composition $\f\colon \varprojlim_{Y\in \mathcal{Y}}\Gamma(Y)\to \Gamma(Y_0)\xrightarrow{\psi} G$. Then $\f(\iota(x))=\varphi(x)$ for every $x\in X$.
	
	If $\f'\colon \varprojlim_{Y\in \mathcal{Y}}\Gamma(Y)\to G$ is another morphism such that $\f'(\iota(x))=\varphi(x)$ for all $x\in X$, then 
	$T\rightsquigarrow\{g\in (\varprojlim_{Y\in \mathcal{Y}}\Gamma(Y))(T)|\ \f(g)=\f'(g)\}$ is a closed subgroup of $\varprojlim_{Y\in \mathcal{Y}}\Gamma(Y)$. It thus suffices to show that $\langle\iota(X)\rangle=\varprojlim_{Y\in \mathcal{Y}}\Gamma(Y)$. The projections $\langle \iota(X)\rangle\to \Gamma(Y)$ are quotient maps because $\Gamma(Y)=\langle Y\rangle$. But then also $\langle \iota(X)\rangle\to \varprojlim_{Y\in \mathcal{Y}}\Gamma(Y)$ is a quotient map. Therefore $\langle\iota(X)\rangle=\varprojlim_{Y\in \mathcal{Y}}\Gamma(Y)$.
\end{proof}


We now specialize to the case that $X$ is a proper subset of $\C$. In this case, besides the projective system $((F(Y))_{Y\in\Y}, (\varphi_{Y',Y})_{Y'\supseteq Y})$ from the beginning of this section, we can associate another projective system of finite rank (abstract) free groups to $X$ as follows. Fix $x_0\in \C$ with $x_0\notin X$. For $Y=\{y_1,\ldots,y_n\}\in \Y$, it is well known (and follows from Van Kampen's theorem) that the fundamental group $\pi_1(\C\smallsetminus Y,x_0)$ of $\C\smallsetminus Y$ with base point $x_0$ is isomorphic to the free group on $n$ generators. A free set of generators is given by choosing, for each $i=1,\ldots,n$, a loop based at $x_0$ that passes once around $y_i$ counterclockwise and does not enclose any other points of $Y$.

For $Y, Y'\in \Y$ with $Y\subseteq Y'$, the inclusion $\C\smallsetminus Y'\subseteq \C\smallsetminus Y$ gives rise to a morphism $$\psi_{Y,Y'}\colon\pi_1(\C\smallsetminus Y',x_0)\to \pi_1(\C\smallsetminus Y,x_0)$$ of groups. In fact, $((\pi_1(\C\smallsetminus Y,x_0))_{Y\in\Y}, (\psi_{Y,Y'})_{Y'\supseteq Y})$ is a projective system of groups.

%
%
%
\begin{lemma} \label{lemma: projective systems isomorphic}
	The projective systems $\big((\pi_1(\C\smallsetminus Y,x_0))_{Y\in\Y}, (\psi_{Y,Y'})_{Y'\supseteq Y}\big)$ and $\big((F(Y))_{Y\in\Y}, (\varphi_{Y,Y'})_{Y'\supseteq Y}\big)$ are isomorphic, i.e., there exists a family $(\alpha_Y)_{Y\in\Y}$ of isomorphisms $\alpha_Y\colon F(Y)\to \pi_1(\C\smallsetminus Y, x_0)$ such that
	\begin{equation}\label{eq: diag for isom}
	\xymatrix{
	F(Y') \ar^-{\alpha_{Y'}}[r] \ar_-{\varphi_{Y',Y}}[d]  & \pi_1(\C\smallsetminus Y', x_0) \ar^-{\psi_{Y,Y'}}[d] \\
	F(Y) \ar^-{\alpha_{Y}}[r] & \pi_1(\C\smallsetminus Y, x_0) 
}
	\end{equation}
	commutes for every $Y'\supseteq Y$.
\end{lemma}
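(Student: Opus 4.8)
The plan is to deduce the isomorphism of projective systems from a coherent choice of free bases. Precisely, it suffices to produce, for every $Y\in\Y$, a free generating set $(\gamma^Y_y)_{y\in Y}$ of $\pi_1(\C\smallsetminus Y,x_0)$ such that for all $Y\subseteq Y'$ in $\Y$ one has $\psi_{Y,Y'}(\gamma^{Y'}_y)=\gamma^Y_y$ whenever $y\in Y$ and $\psi_{Y,Y'}(\gamma^{Y'}_y)=1$ whenever $y\in Y'\smallsetminus Y$. Given such a family, one defines $\alpha_Y\colon F(Y)\to\pi_1(\C\smallsetminus Y,x_0)$ by $\alpha_Y(y)=\gamma^Y_y$; this is an isomorphism because $(\gamma^Y_y)_{y\in Y}$ is a free basis, and \eqref{eq: diag for isom} commutes because the two composites agree on every free generator $y$ of $F(Y')$: for $y\in Y$ both sides give $\gamma^Y_y$, and for $y\in Y'\smallsetminus Y$ both sides give $1$. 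So the lemma reduces to constructing the family $(\gamma^Y_y)$.

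For a fixed finite $Y$ the generating set is the classical one recalled just before the lemma: choose simple arcs from $x_0$ to the points of $Y$, pairwise disjoint except at $x_0$ and meeting $Y$ only at their endpoints, and let $\gamma^Y_y$ be the ``lollipop'' loop that runs out along the arc to $y$, once counterclockwise around a small circle about $y$, and back; by van Kampen's theorem these form a free basis of $\pi_1(\C\smallsetminus Y,x_0)$. The relations $\psi_{Y,Y'}(\gamma^{Y'}_y)=1$ for $y\in Y'\smallsetminus Y$ then hold automatically for any such representative: if the circle of $\gamma^{Y'}_y$ is small enough to bound a disc containing no other point of $Y'$, that disc contains no point of $Y$, so $\gamma^{Y'}_y$ is null-homotopic in $\C\smallsetminus Y$. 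Hence all the real content is in arranging, as $Y$ varies, that $\gamma^{Y'}_y$ is homotopic to $\gamma^Y_y$ \emph{inside} $\C\smallsetminus Y$ for all $y\in Y\subseteq Y'$.

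This coherence is the step I expect to be the main obstacle. It cannot be achieved by a single fixed family of arcs: when $X=\C\smallsetminus\{x_0\}$ — the case needed for Theorem A — there are no loops at all in $\C\smallsetminus X$, so the arc system must genuinely depend on the finite set $Y$, and one only has homotopies in $\C\smallsetminus Y$, never literal equalities, to work with. I would try to build the arc systems by a transfinite recursion that introduces the points of $X$ one at a time: having a coherent family of spiders for all finite subsets of an initial segment, one extends it by routing an arc to the new point so as to avoid the (finite) spider already present for each relevant $Y$, and by replacing each old loop by a representative homotopic to it in the appropriate punctured plane but missing the new point, checking that these replacements can be made compatibly. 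Equivalently, the task is to show that the inverse system over $\Y$ of the (nonempty) sets of admissible spiders, with the natural restriction maps, has nonempty inverse limit; since $\Y$ is uncountable when $X$ is, mere surjectivity of the restriction maps does not suffice, and one must exploit the special feature of these maps — that an extra puncture can always be accommodated by a local modification — to push the argument through. Once a coherent family $(\gamma^Y_y)_{Y\in\Y}$ has been produced, the verification of the first paragraph finishes the proof.
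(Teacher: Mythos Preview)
Your reduction to constructing a coherent family of free generators $(\gamma^Y_y)_{y\in Y}$ is exactly right and matches the paper's setup, as does the observation that $\psi_{Y,Y'}(\gamma^{Y'}_y)=1$ for $y\in Y'\smallsetminus Y$ comes for free. You also correctly reformulate the remaining task as showing that a certain inverse system over $\Y$ has nonempty limit. But the argument stops there: you note that surjectivity alone does not suffice over an uncountable index set, propose a transfinite recursion, and then write that ``one must exploit the special feature of these maps \ldots\ to push the argument through'' without actually doing so. That sentence is where the proof should be; as written, the proposal is a plan, not a proof.

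The paper closes precisely this gap, and by a device simpler than the recursion you envisage: compactness. Rather than taking the inverse system of \emph{all} admissible spiders for each $Y$ (an infinite set, which is why you get stuck), the paper restricts to a \emph{finite} set $B_Y$ consisting of those tuples $(g_y)_{y\in Y}$ in which each $g_y\in\pi_1(\C\smallsetminus Y,x_0)$ is a ``canonical generator at $y$'', i.e., the class of a loop passing once counterclockwise around $y$ and enclosing no other point of $Y$. The paper asserts that there are only finitely many such classes for each $y$, so each $B_Y$ is finite and nonempty. Since $\psi_{Y,Y'}$ sends a canonical generator at $y'\in Y'$ to a canonical generator at $y'$ when $y'\in Y$ and to $1$ when $y'\notin Y$, the transition maps restrict to maps $\Psi_{Y,Y'}\colon B_{Y'}\to B_Y$, yielding an inverse system of nonempty finite sets indexed by $\Y$. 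Such a system always has nonempty inverse limit, regardless of the cardinality of the index set, and any element of $\varprojlim_{Y\in\Y}B_Y$ is exactly the coherent family you sought. The missing idea, then, is to shrink the sets in your inverse system from ``all spiders'' to a finite subcollection stable under the transition maps; once that is done, compactness replaces the transfinite recursion entirely.
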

\begin{proof}
	For $Y\in \Y$ and $y\in Y$, we define a \emph{canonical generator at $y$} to be an element of $\pi_1(\C\smallsetminus Y,x_0)$ that is the homotopy class of a loop based at $x_0$ that passes once counterclockwise around $y$, not enclosing any other points of $Y$.
The following graphics depict two canonical generators at $y_1$.

\begin{figure}[h!]
\includegraphics[scale=0.2]{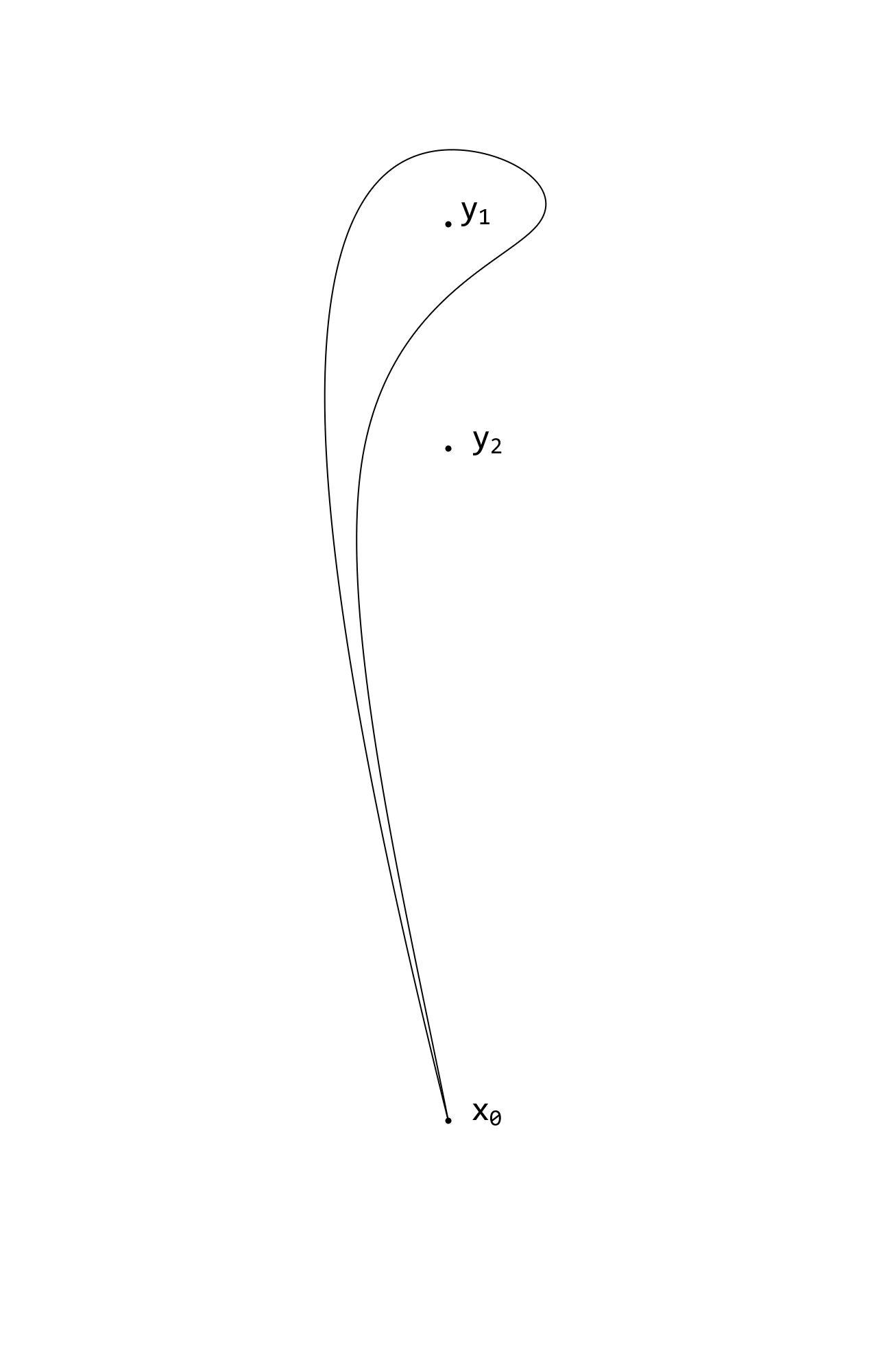}
\includegraphics[scale=0.172]{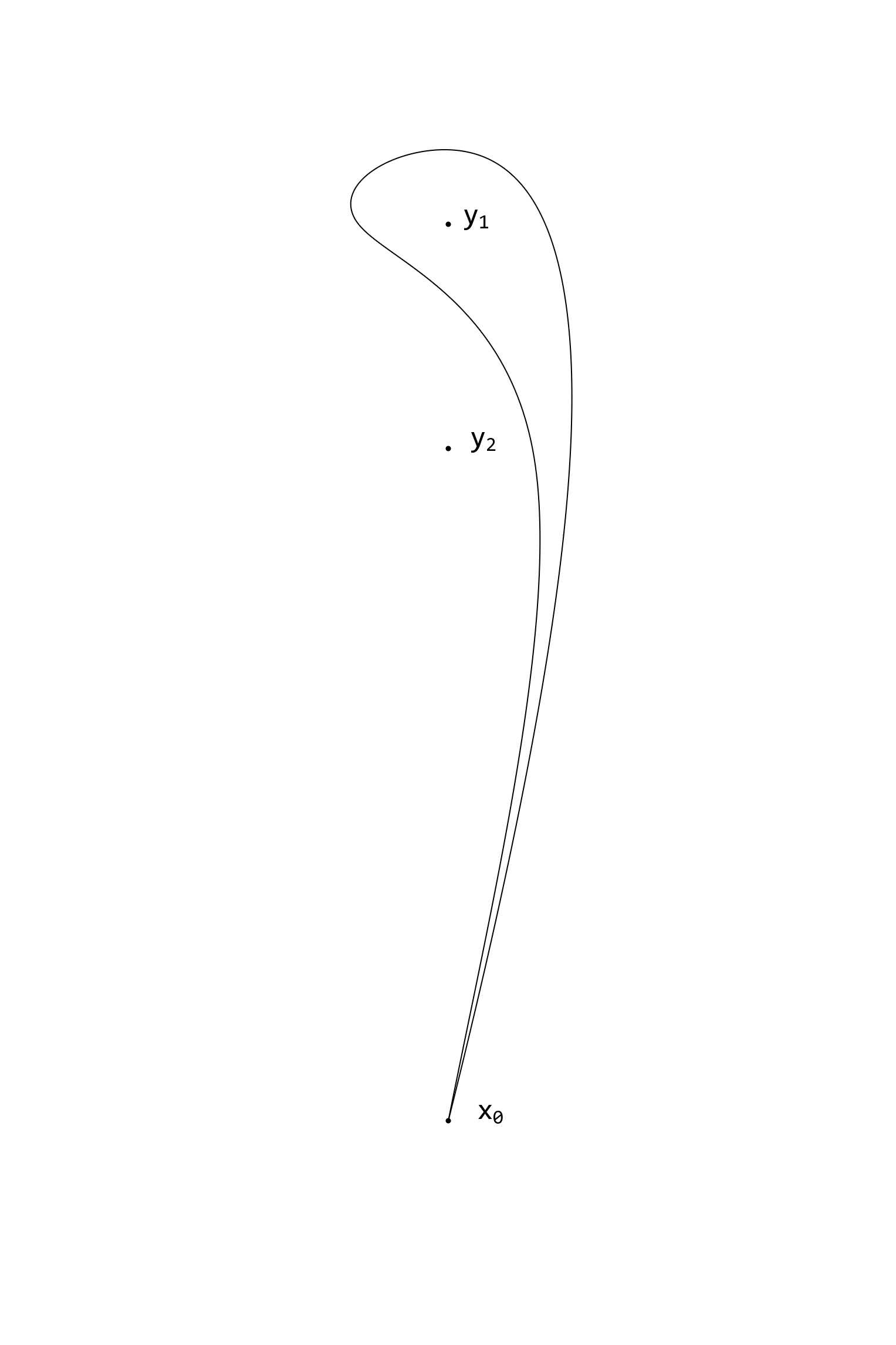}
\end{figure}
	Note that  $\pi_1(\C\smallsetminus Y,x_0)$ contains only finitely many canonical generators at $y$.
	Thus the set	$$B_Y=\{(g_y)_{y\in Y}|\ g_y\in \pi_1(\C\smallsetminus Y, x_0) \text{ is a canonical generator at } y \text{ for every } y\in Y \}$$
	is finite. The map $\psi_{Y,Y'}\colon\pi_1(\C\smallsetminus Y',x_0)\to \pi_1(\C\smallsetminus Y,x_0)$ maps a canonical generator at $y'\in Y'\supseteq Y$ either to a canonical generator at $y$ (if $y'\in Y$) or to $1$ if $y'\notin Y$. 
%
%
%
%

	For $Y, Y'\in \Y$ with $Y\subseteq Y'$, we can define a map $\Psi_{Y,Y'}\colon B_{Y'}\to B_Y$, by $\Psi_{Y,Y'}((g_{y'})_{y'\in Y'})=(\psi_{Y,Y'}(g_y))_{y\in Y}$.
	
	Then $\big((B_Y)_{Y\in\Y}, (\Psi_{Y,Y'})_{Y'\supseteq Y}\big)$ is a projective system of finite sets. Thus the corresponding projective limit is non-empty (\cite[Prop. 1.1.4]{RibesZalesskii:ProfiniteGroups}).
	Let $((g_y)_{y\in Y})_{Y\in\Y}$ be an element of $\varprojlim_{Y\in \mathcal{Y}} B_Y$. For $Y\in\Y$ define $\alpha_Y\colon F(Y)\to \pi_1(\C\smallsetminus Y, x_0)$ by $\alpha_Y(y)=g_y$ for $y\in Y$. Then $\alpha_Y$ is an isomorphism and by construction diagram (\ref{eq: diag for isom}) commutes.
\end{proof}

%
%
%

\section{Main result} \label{sec:main result}

Throughout Section \ref{sec:main result} we will use the following notation.
We fix a Picard-Vessiot extension $L/\C(x)$ for the family all regular singular differential equations over $\C(x)$. For a subset $X$ of $\C$ we denote with $L_X\subseteq L$ the Picard-Vessiot extension of $\C(x)$ for the family of all regular singular differential equations over $\C(x)$ with singularities in $X\cup \{\infty\}$. So $L=L_\C$. We also set $\Gamma_X=G(L_X/\C(x))$.
With this notation, our goal is to show that $\Gamma_X\cong \Gamma(X)$ for every subset $X$ of $\C$.

We first tackle the case of proper subsets of $\mathbb{C}$.
The following proposition generalizes Corollary~\ref{cor: RH finite} from finite subsets of $\C$ to arbitrary proper subsets of $\C$.

\begin{prop} \label{prop: main for X}
	Let $X$ be a proper subset of $\C$. Then the differential Galois group $\Gamma_X$ of the family of all regular singular differential equations over $\C(x)$ with singularities contained in $X\cup\{\infty\}$, is isomorphic to the free proalgebraic group $\Gamma(X)$ on $X$.
\end{prop}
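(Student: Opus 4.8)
The strategy is to combine the two projective-limit descriptions developed in the previous sections: the Tannakian one coming from the Riemann--Hilbert correspondence, and the purely group-theoretic one from Section~\ref{sec: Projective systems}. Write $\Y$ for the directed set of finite subsets of $X$. First I would observe that a regular singular differential module over $\C(x)$ with singularities in $X\cup\{\infty\}$ has all its singularities in some finite subset $Y\in\Y$ (a differential module has only finitely many singular points), so $\operatorname{RegSing}(\C(x),X)=\varinjlim_{Y\in\Y}\operatorname{RegSing}(\C(x),Y)$ as Tannakian categories, and correspondingly $\omega_{X,x_0}=\varinjlim_Y\omega_{Y,x_0}$. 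On the level of the associated proalgebraic groups of tensor automorphisms this gives
$$\Gamma_X\simeq\underline{\operatorname{Aut}}^\otimes(\omega_{X,x_0})=\varprojlim_{Y\in\Y}\underline{\operatorname{Aut}}^\otimes(\omega_{Y,x_0}),$$
where the transition maps are the restriction maps discussed after Corollary~\ref{cor: RH finite}. (One should check that the natural map $\Gamma_X\to\varprojlim_Y\Gamma_Y$ is an isomorphism; this follows because $L_X=\bigcup_Y L_Y$, so $\Gamma_X=\varprojlim_Y G(L_Y/\C(x))=\varprojlim_Y\Gamma_Y$ by the differential Galois correspondence, Theorem~\ref{theo: differential Galois correspondencee}, together with $\Gamma_Y\simeq\underline{\operatorname{Aut}}^\otimes(\omega_{Y,x_0})$.)

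Next I would identify the transition maps. For $Y\subseteq Y'$ in $\Y$, the square \eqref{eq: can isom between systems} shows that, via the canonical isomorphisms $\pi_1(\C\smallsetminus Y,x_0)^\alg\simeq\underline{\operatorname{Aut}}^\otimes(\omega_{Y,x_0})$, the transition map on the right-hand system is identified with the proalgebraic completion of $\psi_{Y,Y'}\colon\pi_1(\C\smallsetminus Y',x_0)\to\pi_1(\C\smallsetminus Y,x_0)$. Hence
$$\Gamma_X\simeq\varprojlim_{Y\in\Y}\pi_1(\C\smallsetminus Y,x_0)^\alg.$$
Now I apply Lemma~\ref{lemma: projective systems isomorphic}: the projective system $\big((\pi_1(\C\smallsetminus Y,x_0))_{Y\in\Y},(\psi_{Y,Y'})\big)$ is isomorphic, as a projective system of abstract groups, to $\big((F(Y))_{Y\in\Y},(\varphi_{Y,Y'})\big)$. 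Applying the proalgebraic-completion functor (which, being a left adjoint of sorts, commutes with the relevant structure — more precisely, passing to proalgebraic completions turns this into an isomorphism of projective systems of proalgebraic groups, using $F(Y)^\alg\simeq\Gamma(Y)$ for finite $Y$), I get an isomorphism of projective systems $\big((\pi_1(\C\smallsetminus Y,x_0)^\alg)_Y\big)\cong\big((\Gamma(Y))_Y,(\varphi_{Y,Y'})\big)$, and therefore an isomorphism of the projective limits:
$$\Gamma_X\cong\varprojlim_{Y\in\Y}\Gamma(Y).$$
Finally, by Lemma~\ref{lemma: canonical limit}, $\varprojlim_{Y\in\Y}\Gamma(Y)=\Gamma(X)$, which completes the proof.

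\textbf{Main obstacle.} The one genuinely delicate point is the passage $\varprojlim_Y \pi_1(\C\smallsetminus Y,x_0)^\alg \cong \varprojlim_Y F(Y)^\alg$: one must be sure that an isomorphism of projective systems of \emph{abstract} groups, after applying $(-)^\alg$ to each term, remains an isomorphism of projective systems of proalgebraic groups with the correct transition maps, and in particular that $\varphi_{Y,Y'}^\alg$ is carried to $\psi_{Y,Y'}^\alg$ by the $\alpha_Y^\alg$. This is just functoriality of $(-)^\alg$ applied to the commuting squares \eqref{eq: diag for isom}, so it is formal, but it is the linchpin. The other point requiring care is justifying $\varprojlim_Y\underline{\operatorname{Aut}}^\otimes(\omega_{Y,x_0})=\underline{\operatorname{Aut}}^\otimes(\omega_{X,x_0})$, i.e.\ that the fundamental group of a union (filtered colimit) of Tannakian subcategories is the inverse limit of the fundamental groups; this is standard Tannakian formalism but should be cited or spelled out. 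Everything else — finiteness of the singular locus, $L_X=\bigcup_Y L_Y$, and the bookkeeping with base points — is routine.
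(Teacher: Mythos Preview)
Your proof is correct and follows essentially the same route as the paper's: both assemble the chain $\Gamma_X \cong \underline{\operatorname{Aut}}^\otimes(\omega_{X,x_0}) \simeq \varprojlim_{Y} \underline{\operatorname{Aut}}^\otimes(\omega_{Y,x_0}) \simeq \varprojlim_{Y} \pi_1(\C\smallsetminus Y,x_0)^\alg \cong \varprojlim_{Y} F(Y)^\alg \simeq \Gamma(X)$ via diagram~\eqref{eq: can isom between systems}, Lemma~\ref{lemma: projective systems isomorphic}, functoriality of $(-)^\alg$, and Lemma~\ref{lemma: canonical limit}, in the same order. The only differences are of emphasis: the paper makes explicit that the first isomorphism is not canonical (it depends on a choice of fibre functor) and that one must fix $x_0 \in \C\smallsetminus X$ at the outset --- this is precisely where the hypothesis that $X$ is proper is used --- which you leave implicit.
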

\begin{proof}
	Fix a base point $x_0\in\C$ with $x_0\notin X$. 
	As $\Gamma_X$ is isomorphic to $\underline{\operatorname{Aut}}^\otimes(\omega_{X,x_0})$, it suffices to show that $\underline{\operatorname{Aut}}^\otimes(\omega_{X,x_0})$ is isomorphic to $\Gamma(X)$. (Note that the isomorphism between $\Gamma_X$ and $\underline{\operatorname{Aut}}^\otimes(\omega_{X,x_0})$ is arguably not canonical. It depends on an isomorphism between the fibre functor $\omega_{X,x_0}$ and the fibre functor defined by $L_X$.)
	
	As in Section \ref{sec: Projective systems}, we consider the directed set $\Y$ of all finite subsets of $X$. Because the category $\operatorname{RegSing}(\C(x),X)$ is the union of the subcategories $\operatorname{RegSing}(\C(x),Y)$, where $Y$ runs through all elements of $\Y$, it is clear that $\varprojlim_{Y\in \mathcal{Y}}\underline{\operatorname{Aut}}^\otimes(\omega_{Y,x_0})\simeq \underline{\operatorname{Aut}}^\otimes(\omega_{X,x_0})$.
	The projective systems $(\underline{\operatorname{Aut}}^\otimes(\omega_{Y,x_0}))_{Y\in \Y}$ and 
	$(\pi_1(\C\smallsetminus Y,x_0)^\alg)_{Y\in\Y}$ are canonically isomorphic by (\ref{eq: can isom between systems}). So also $ \varprojlim_{Y\in \mathcal{Y}}\underline{\operatorname{Aut}}^\otimes(\omega_{Y,x_0}) \simeq\varprojlim_{Y\in \mathcal{Y}}\pi_1(\C\smallsetminus Y,x_0)^\alg$.

	By Lemma \ref{lemma: projective systems isomorphic}, there exists an isomorphism between the projective systems (of abstract groups)
	 $\big((\pi_1(\C\smallsetminus Y,x_0))_{Y\in\Y}, (\varphi_{Y',Y})_{Y\subseteq Y'}\big)$ and $\big((F(Y))_{Y\in\Y}, (\psi_{Y,Y'})_{Y\subseteq Y'}\big)$.
	 As the proalgebraic completion defines a functor from the category of groups to the category of proalgebraic groups, we obtain an isomorphism between the projective systems (of proalgebraic groups)
	  $\big((\pi_1(\C\smallsetminus Y,x_0)^\alg)_{Y\in\Y}, (\varphi_{Y',Y}^\alg)_{Y\subseteq Y'}\big)$ and $\big((F(Y)^\alg)_{Y\in\Y}, (\psi^\alg_{Y,Y'})_{Y\subseteq Y'}\big)$. This isomorphism in turn yields an isomorphism $\varprojlim_{Y\in \mathcal{Y}}\pi_1(\C\smallsetminus Y,x_0)^\alg\cong \varprojlim_{Y\in \mathcal{Y}}F(Y)^\alg$ between the corresponding projective limits. The latter limit is, by Lemma \ref{lemma: canonical limit}, isomorphic to $\Gamma(X)$.	 
	 
	 In summary, we have
	 $$\Gamma_X\cong\underline{\operatorname{Aut}}^\otimes(\omega_{X,x_0})\simeq  \varprojlim_{Y\in \mathcal{Y}}\underline{\operatorname{Aut}}^\otimes(\omega_{Y,x_0}) \simeq\varprojlim_{Y\in \mathcal{Y}}\pi_1(\C\smallsetminus Y,x_0)^\alg\cong \varprojlim_{Y\in \mathcal{Y}}F(Y)^\alg\simeq\varprojlim_{Y\in \mathcal{Y}} \Gamma(Y)\simeq \Gamma(X).$$
\end{proof}

Note that Proposition \ref{prop: main for X} and its proof does not apply to the case $X=\C$ of prime interest, because for $X=\C$ we cannot choose a base point $x_0\notin \C\smallsetminus X$. To accomplish the case $X=\C$, we will use a characterization of free proalgebraic groups in terms embedding problems.

To this end, we need to recall some definitions from \cite{Wibmer:FreeProalgebraicGroups}.
Let $\Gamma$ be a proalgebraic group. An \emph{embedding problem} for $\Gamma$ consists of two quotient maps $\alpha\colon G\twoheadrightarrow H$ and $\beta\colon \Gamma\twoheadrightarrow H$ of proalgebraic groups. The embedding problem is \emph{algebraic} if $G$ (and therefore also $H$) is an algebraic group. The embedding problem is trivial if $\alpha$ is an isomorphism. A \emph{solution} of the embedding problem is a quotient map $\f\colon \Gamma \twoheadrightarrow G$ such that
$$
\xymatrix{
\Gamma \ar@{->>}^-\beta[rd] \ar@{..>>}_-\f[d] & \\
G \ar@{->>}^-\alpha[r] & H	
}
$$
commutes. A family $(\f_i)_{i\in I}$ of solutions is \emph{independent} if the induced map $$\prod_{i\in I}\f_i\colon \Gamma\to \prod_{i\in I} (G\twoheadrightarrow H)$$ is a quotient map. Here $\prod_{i\in I} (G\twoheadrightarrow H)$ denotes the fibre product of $|I|$-copies of $G$ over $H$.


The \emph{rank} $\rank(\Gamma)$ of a proalgebraic group $\Gamma$ is defined as the smallest cardinal $\kappa$ such that $\Gamma$ can be written as a projective limit of algebraic groups over an directed set of cardinality $\kappa$. See \cite[Prop. 3.1]{Wibmer:FreeProalgebraicGroups} for other characterizations of the rank.

The following theorem provides a characterization of free proalgebraic groups in terms of algebraic embedding problems.

\begin{theo} \label{theo: characterize freeness through embedding problems}
	Let $\Gamma$ be a proalgebraic group with $\rank(\Gamma)\leq |\C|$.
	 Then $\Gamma$ is isomorphic to the free proalgebraic group on a set of cardinality $|\C|$ if and only if every non-trivial algebraic embedding problem for $\Gamma$ has $|\C|$ independent solutions.
\end{theo}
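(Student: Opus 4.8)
\emph{Overall plan.} I would prove the two implications separately. The forward implication (free $\Rightarrow$ many independent solutions) is an explicit construction using the description $\Gamma(X)=\langle X\rangle$ and the fact that maps into algebraic groups have finite support. The converse (many independent solutions $\Rightarrow$ free) I would prove by a transfinite back-and-forth construction of an isomorphism $\Gamma(X)\cong\Gamma$ for $|X|=|\C|$, modelled on the classical characterisation of free profinite groups via embedding problems (Iwasawa's theorem and its uncountable-rank analogue, cf.\ \cite{RibesZalesskii:ProfiniteGroups}).

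\emph{From free to many solutions.} Identify $\Gamma$ with $\Gamma(X)$, $|X|=|\C|$, and let $\alpha\colon G\twoheadrightarrow H$, $\beta\colon\Gamma(X)\twoheadrightarrow H$ be a non-trivial algebraic embedding problem. Since $H$ is algebraic, $X\to\Gamma(X)(\C)\to H(\C)$ converges to $1$, hence has finite support $X_0$. Using $\Gamma(X)=\langle X\rangle$ and the fact that a morphism of proalgebraic groups whose image is all of the target is a quotient map, a solution is the same datum as a finitely supported map $g\colon X\to G(\C)$ with $\alpha\circ g=\beta|_X$ and $\langle g(X)\rangle=G$. Fix once and for all lifts $g(x)\in G(\C)$ of $\beta(x)$ for $x\in X_0$, and a finite subset $\{n_1,\dots,n_r\}$ generating the algebraic group $N=\ker\alpha$ as a closed subgroup; non-triviality gives $N\neq 1$, so $r\geq 1$. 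As $|\C|$ is infinite, $X\smallsetminus X_0$ has cardinality $|\C|$ and can be partitioned into $|\C|$ many $r$-element subsets $(T_\iota)_\iota$. For each $\iota$, let $g_\iota$ agree with the fixed lifts on $X_0$, send the elements of $T_\iota$ bijectively to $n_1,\dots,n_r$, and send everything else to $1$. Then $\langle g_\iota(X)\rangle$ contains $N$ and maps onto $\langle\beta(X_0)\rangle=H$, so it equals $G$; hence each $g_\iota$ defines a solution $\f_\iota$. For independence one checks that inside the fibre product $P=\prod_\iota(G\twoheadrightarrow H)$ the closed subgroup generated by the tuples $(\f_\iota(x))_\iota$, $x\in X$, contains the diagonal copy of $\langle\{g(x):x\in X_0\}\rangle$ and, for each coordinate, the copy of $N$ supported in that coordinate; since $\prod_\iota N$ is normal in $P$ and the restricted product $\bigoplus_\iota N$ is Zariski-dense in it, this closed subgroup is all of $P$.

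\emph{From many solutions to free.} Fix $X$ with $|X|=|\C|=:\kappa$. Both $\Gamma(X)=\varprojlim_Y\Gamma(Y)$ (Lemma~\ref{lemma: canonical limit}) and $\Gamma$ are proalgebraic of rank $\leq\kappa$, and by the first part $\Gamma(X)$ also satisfies the hypothesis. So it suffices to show that any two proalgebraic groups $A$, $B$ of rank $\leq\kappa$ for which every non-trivial algebraic embedding problem has $\kappa$ independent solutions are isomorphic. I would fix cofinal families $(\bar A_\xi)_{\xi<\kappa}$, $(\bar B_\xi)_{\xi<\kappa}$ of algebraic quotients of $A$ and $B$, and build by transfinite recursion algebraic groups $C_\xi$ with compatible quotient maps $A\twoheadrightarrow C_\xi$ and $B\twoheadrightarrow C_\xi$, arranging that the $C_\xi$ become cofinal in the algebraic quotients of both groups, so that at the end $A=\varprojlim_\xi C_\xi=B$. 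A successor step aimed at dominating $\bar A_\xi$ goes: form $C'=\im(A\to C_\xi\times\bar A_\xi)$; then $(C'\twoheadrightarrow C_\xi,\ B\twoheadrightarrow C_\xi)$ is an algebraic embedding problem for $B$, and (when non-trivial) a solution yields a quotient map $B\twoheadrightarrow C'$ compatible with the existing data, so one sets $C_{\xi+1}=C'$; steps aimed at $B$ are symmetric, using the hypothesis on $A$.

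\emph{The main obstacle.} The successor steps are routine. The real difficulty is at the limit stages together with the cardinal bookkeeping: $\varprojlim_{\mu<\lambda}C_\mu$ is only proalgebraic, not algebraic, so it cannot simply be carried forward as the next $C_\lambda$ and fed into an \emph{algebraic} embedding problem. One must instead organise the $C_\xi$ into a suitable directed system rather than a chain, and it is precisely here that one needs ``$|\C|$ independent solutions'' rather than mere solvability, to guarantee enough room to keep the construction cofinal on both sides through all $\kappa$ steps. One must also verify that a proalgebraic group of rank $\leq|\C|$ admits a presentation by such a $\kappa$-indexed system, despite $|\C|=2^{\aleph_0}$ possibly being singular. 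This is the proalgebraic analogue of the passage from Iwasawa's countable theorem to its uncountable-rank generalisation, and I expect it to be the technical heart of the argument.
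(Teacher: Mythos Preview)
Your approach is genuinely different from the paper's. The paper does not prove this theorem from scratch: both directions are deduced from \cite[Thm.~3.42]{Wibmer:FreeProalgebraicGroups} (together with Def.~3.25 and Thm.~3.24 there), and the only additional work is the observation that the hypothesis forces $\rank(\Gamma)=|\C|$ rather than merely $\leq|\C|$, via the embedding problem $\Ga\twoheadrightarrow 1$, $\Gamma\twoheadrightarrow 1$ and the fact that $\rank(\Ga^{|\C|})=|\C|$. In other words, in the paper this theorem is a corollary of a general freeness criterion proved elsewhere; you are instead attempting to reprove that criterion in the special case $\kappa=|\C|$.

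Your forward direction is a correct, self-contained construction and is more explicit than what the paper gives. The density of $\bigoplus_\iota N$ in $\prod_\iota N$ (which you use for independence) does hold in the proalgebraic setting: any morphism from $\prod_\iota N$ to an algebraic group factors through a finite subproduct because the Hopf algebra of the target is finitely generated, so the argument goes through. One small point: you should check that an algebraic $N$ over $\C$ is generated as a closed subgroup by finitely many $\C$-points; this is fine in characteristic zero since $N$ is smooth and noetherian.

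For the converse, your back-and-forth outline is the right strategy and is indeed what underlies \cite[Thm.~3.42]{Wibmer:FreeProalgebraicGroups}, but the gap you flag is real and is precisely the content of that theorem. Handling the limit stages and the bookkeeping that turns ``$\kappa$ independent solutions'' into a cofinality statement on both sides is not routine; in the cited reference this is packaged via the notion of being ``saturated'' (Def.~3.25 and the equivalences of Thm.~3.24 there), and the proof occupies several pages. Also note that your ``rank $\leq\kappa$'' in the back-and-forth setup should really be ``rank $=\kappa$'': the paper extracts this from the hypothesis via the $\Ga$ trick above, and you will need it to index cofinal families of algebraic quotients by $\kappa$. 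So your proposal is a correct plan, but completing it amounts to reproving the cited theorem rather than bypassing it.
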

\begin{proof}
	First assume that $\Gamma$ is isomorphic to $\Gamma(X)$, where $X$ is a set of cardinality $|\C|$. Since $\rank(\Gamma(X))=|X|=|\C|$ by \cite[Cor. 3.12]{Wibmer:FreeProalgebraicGroups}, we find $\rank(\Gamma)=|\C|$. Thus the claim follows from Theorem 3.42 (applied with $\mathcal{C}$ the formation of all algebraic groups) paired with Definition~3.25 of \cite{Wibmer:FreeProalgebraicGroups}.

	Conversely, assume that every non-trivial algebraic embedding problem for $\Gamma$ has $|\C|$ independent solutions. We claim that $\rank(\Gamma)=|\C|$. Consider the embedding problem $\alpha\colon \Ga\to 1$, $\beta\colon \Gamma\to 1$ for $\Gamma$, where $\Ga$ is the additive group. By assumption, there exist solutions $(\f_x)_{x\in\C}$ such that the induced morphism $\Gamma\to \Ga^{|\C|}$ is a quotient map. The rank of $\Ga^{|\C|}$ is $|\C|$ (\cite[Ex. 3.3]{Wibmer:FreeProalgebraicGroups}) and the rank can only decrease when passing to a quotient (\cite[Lem. 3.5]{Wibmer:FreeProalgebraicGroups}). So $\rank(\Gamma)\geq |\C|$ and consequently $\rank(\Gamma)=|\C|$. Our assumption on $\Gamma$ therefore implies that $\Gamma$ satisfies condition (vii) of \cite[Thm. 3.24]{Wibmer:FreeProalgebraicGroups}. Thus the claim follows again from \cite[Thm. 3.42]{Wibmer:FreeProalgebraicGroups}.
\end{proof}

The following lemma is our crutch to go from proper subsets of $\C$ to all of $\C$.

\begin{lemma} \label{lemma: step for main theorem}
	Let $\Gamma$ be a proalgebraic group with $\rank(\Gamma)\leq |\C|$ and let $X$ be a set of cardinality $|\C|$. Assume that every quotient map $\Gamma\twoheadrightarrow H$ to an algebraic group $H$ can be factored as $\Gamma\twoheadrightarrow \Gamma(X)\twoheadrightarrow H$. Then $\Gamma$ is isomorphic to $\Gamma(X)$.
\end{lemma}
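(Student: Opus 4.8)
The plan is to verify that $\Gamma$ satisfies the embedding-problem characterization of Theorem~\ref{theo: characterize freeness through embedding problems}. Since we are given $\rank(\Gamma)\leq|\C|$, it suffices to show that every non-trivial algebraic embedding problem $\alpha\colon G\twoheadrightarrow H$, $\beta\colon\Gamma\twoheadrightarrow H$ for $\Gamma$ has $|\C|$ independent solutions. The hypothesis on $\Gamma$ says that the quotient map $\beta$ factors as $\Gamma\twoheadrightarrow\Gamma(X)\xrightarrow{\overline\beta}H$ for some quotient map $\overline\beta$; call the first map $\pi\colon\Gamma\twoheadrightarrow\Gamma(X)$. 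The idea is to pull back the embedding problem along $\pi$ to an embedding problem for $\Gamma(X)$, solve it there (using that $\Gamma(X)$ is free, hence \emph{does} have $|\C|$ independent solutions to every non-trivial algebraic embedding problem, by Theorem~\ref{theo: characterize freeness through embedding problems} applied in the easy direction), and then compose each solution with $\pi$ to get solutions for $\Gamma$.

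First I would record that $(\alpha\colon G\twoheadrightarrow H,\ \overline\beta\colon\Gamma(X)\twoheadrightarrow H)$ is a non-trivial algebraic embedding problem for $\Gamma(X)$: it is algebraic because $G$ is, and it is non-trivial because $\alpha$ is the same map as in the original problem, which was assumed non-trivial. By Theorem~\ref{theo: characterize freeness through embedding problems} (and the fact that $|X|=|\C|$, so $\Gamma(X)$ is the free proalgebraic group on a set of cardinality $|\C|$), this embedding problem has a family $(\f_x)_{x\in X}$ of $|\C|$ independent solutions, i.e.\ quotient maps $\f_x\colon\Gamma(X)\twoheadrightarrow G$ with $\alpha\circ\f_x=\overline\beta$ such that the induced map $\Gamma(X)\to\prod_{x\in X}(G\twoheadrightarrow H)$ is a quotient map. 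Set $\psi_x=\f_x\circ\pi\colon\Gamma\to G$. Then $\psi_x$ is a composition of quotient maps, hence a quotient map, and $\alpha\circ\psi_x=\alpha\circ\f_x\circ\pi=\overline\beta\circ\pi=\beta$, so each $\psi_x$ is a solution of the original embedding problem for $\Gamma$. Independence of $(\psi_x)_{x\in X}$ follows because the induced map $\Gamma\to\prod_{x\in X}(G\twoheadrightarrow H)$ factors as $\Gamma\xrightarrow{\pi}\Gamma(X)\to\prod_{x\in X}(G\twoheadrightarrow H)$, a composition of two quotient maps, hence a quotient map. Thus $\Gamma$ has $|\C|$ independent solutions to every non-trivial algebraic embedding problem, and by Theorem~\ref{theo: characterize freeness through embedding problems} it is isomorphic to the free proalgebraic group on a set of cardinality $|\C|$, i.e.\ to $\Gamma(X)$.

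The only genuine subtlety is to make sure the factorization of $\beta$ through $\Gamma(X)$ is compatible with the embedding problem in the required way, and in particular that the pulled-back problem is still \emph{non-trivial}; this is immediate here because the upper map $\alpha$ of the pulled-back problem is literally the original $\alpha$, which is not an isomorphism by hypothesis. A minor technical point worth spelling out is that the hypothesis only asserts a factorization $\Gamma\twoheadrightarrow\Gamma(X)\twoheadrightarrow H$ exists for the particular quotient $\beta$, so one should fix such a factorization at the outset and work with it throughout; no uniqueness is needed. I expect no real obstacle beyond carefully checking that compositions and fibre-product maps of quotient maps are quotient maps, which is standard (faithful flatness is preserved under composition and under the relevant base changes).
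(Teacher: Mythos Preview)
Your proof is correct and essentially identical to the paper's: factor $\beta$ through $\Gamma(X)$, solve the induced embedding problem for $\Gamma(X)$ using Theorem~\ref{theo: characterize freeness through embedding problems}, and push the solutions back to $\Gamma$ via composition with the factoring map, noting that independence is preserved because the map to the fibre product is a composition of quotient maps. The only cosmetic difference is notation ($\pi,\overline\beta,\psi_x$ versus the paper's $\beta_0,\beta',\f_x$).
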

\begin{proof}
	 Let
	\begin{equation} \label{diagram: embedding problem}
	\xymatrix{
		\Gamma \ar@{->>}^-\beta[rd] \ar@{..>>}[d] & \\
		G \ar@{->>}^-\alpha[r] & H	
	}
	\end{equation}
	be an non-trivial algebraic embedding problem for $\Gamma$. By assumption, $\beta\colon \Gamma\twoheadrightarrow H$ factors as $\beta\colon\xymatrix{\Gamma \ar@{->>}^-{\beta_0}[r] & \Gamma(X) \ar@{->>}^{\beta'}[r] & H}$.	
	The embedding problem
		\begin{equation*} 
	\xymatrix{
		\Gamma(X) \ar@{->>}^-{\beta'}[rd] \ar@{..>>}[d] & \\
		G \ar@{->>}^-\alpha[r] & H	
	}
	\end{equation*} has $|\C|$ independent solutions $(\f'_x)_{x\in X}$ (Theorem \ref{theo: characterize freeness through embedding problems}). Then $\f_x\colon \xymatrix{\Gamma \ar@{->>}^-{\beta_0}[r] & \Gamma(X) \ar@{->>}^{\f_x'}[r] & G}$ is a solution of (\ref{diagram: embedding problem}). Moreover, as $\prod_{x\in X}\f_x\colon\xymatrix{\Gamma \ar@{->>}^-{\beta_0}[r] & \Gamma(X) \ar@{->>}^-{\prod \f_x'}[r] & \prod_{x\in X}(G\twoheadrightarrow H)}$ is a quotient map, the family $(\f_x)_{x\in X}$ is independent. Thus $\Gamma$ is isomorphic to $\Gamma(X)$ by Theorem \ref{theo: characterize freeness through embedding problems}.
\end{proof}

We are now prepared to prove our main result.

\begin{theo} \label{theo: main}
	The differential Galois group of the family of all regular singular differential equations over $\C(x)$ is isomorphic to the free proalgebraic group on a set of cardinality $|\C|$.
\end{theo}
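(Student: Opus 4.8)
The plan is to obtain Theorem \ref{theo: main} as an immediate consequence of Lemma \ref{lemma: step for main theorem}, applied to the proalgebraic group $\Gamma_\C = G(L/\C(x))$ and to a set $X$ of cardinality $|\C|$. This reduces everything to verifying the two hypotheses of that lemma for $\Gamma_\C$: that $\rank(\Gamma_\C)\le|\C|$, and that every quotient map $\Gamma_\C\twoheadrightarrow H$ onto an algebraic group $H$ factors as $\Gamma_\C\twoheadrightarrow\Gamma(X)\twoheadrightarrow H$.

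For the rank bound I would argue as follows. Every regular singular differential equation over $\C(x)$ has only finitely many singularities, so $L$ is the directed union of the Picard-Vessiot extensions $L_{\mathcal{F}'}$ attached to the finite subfamilies $\mathcal{F}'$ of the family of all regular singular equations. A finite subfamily is, after forming a direct sum, a single regular singular equation, so each $G(L_{\mathcal{F}'}/\C(x))$ is algebraic and $\Gamma_\C=\varprojlim_{\mathcal{F}'}G(L_{\mathcal{F}'}/\C(x))$. Since $|\C(x)|=|\C|$, the set of all regular singular equations over $\C(x)$, and hence its directed set of finite subfamilies, has cardinality at most $|\C|$; therefore $\rank(\Gamma_\C)\le|\C|$.

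The main point is the factorization hypothesis, and here is where the one new idea is needed. Let $\beta\colon\Gamma_\C\twoheadrightarrow H$ be a quotient map onto an algebraic group. By the differential Galois correspondence (Theorem \ref{theo: differential Galois correspondencee}), $\beta$ is, up to a canonical identification, the restriction map onto $G(M/\C(x))$, where $M=L^{\ker\beta}$ is an intermediate Picard-Vessiot extension; since $H$ is algebraic, $M/\C(x)$ is of finite type, so $M$ is the Picard-Vessiot extension of a single regular singular equation $\de(y)=Ay$ (this uses the Tannakian description of $L$ as the Picard-Vessiot extension for the Tannakian category of all regular singular differential modules). The equation $\de(y)=Ay$ has only finitely many singularities, so I can pick $x_1\in\C$ that is not a singularity of it; then $\de(y)=Ay$ belongs to the family defining $L_{\C\smallsetminus\{x_1\}}$, whence $M\subseteq L_{\C\smallsetminus\{x_1\}}$ and $\beta$ factors as $\Gamma_\C\twoheadrightarrow\Gamma_{\C\smallsetminus\{x_1\}}\twoheadrightarrow H$, both arrows being quotient maps by Theorem \ref{theo: differential Galois correspondencee}. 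Finally $\C\smallsetminus\{x_1\}$ is a proper subset of $\C$, so Proposition \ref{prop: main for X} gives $\Gamma_{\C\smallsetminus\{x_1\}}\cong\Gamma(\C\smallsetminus\{x_1\})$, and as $|\C\smallsetminus\{x_1\}|=|\C|=|X|$ the universal property of free proalgebraic groups gives $\Gamma(\C\smallsetminus\{x_1\})\cong\Gamma(X)$; composing produces the required factorization $\Gamma_\C\twoheadrightarrow\Gamma(X)\twoheadrightarrow H$.

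I expect the crux to be the observation, in the previous paragraph, that the point $x_1$, and hence the quotient map $\Gamma_\C\twoheadrightarrow\Gamma(X)$ that is used, may \emph{depend} on $\beta$: there is no single such map through which all algebraic quotients factor, because choosing one would amount to choosing a base point avoiding all of $\C$, which is impossible. This is exactly the place where Douady's compactness-plus-canonical-generators argument has no analog, and it is why Lemma \ref{lemma: step for main theorem} — and behind it the embedding-problem characterization, Theorem \ref{theo: characterize freeness through embedding problems} — is phrased so as to require only a factorization for each quotient separately. Granting the two hypotheses, Lemma \ref{lemma: step for main theorem} yields $\Gamma_\C\cong\Gamma(X)$, the free proalgebraic group on a set of cardinality $|\C|$, which is the assertion of Theorem \ref{theo: main}.
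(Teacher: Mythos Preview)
Your proposal is correct and follows essentially the same route as the paper: verify the two hypotheses of Lemma~\ref{lemma: step for main theorem} for $\Gamma_\C$, using Proposition~\ref{prop: main for X} on a proper subset $\C\smallsetminus\{x_1\}$ of cardinality $|\C|$ to produce the required factorization. The only cosmetic difference is that the paper, instead of asserting that $M=L^{\ker\beta}$ is \emph{exactly} the Picard--Vessiot extension of a single regular singular equation, merely observes $M\subseteq L_{\mathcal F}$ for a finite family $\mathcal F$ of regular singular equations and then passes to the finite singular set $Y$ of $\mathcal F$; this avoids invoking the Tannakian closure property you allude to in parentheses, but the end result is identical.
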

\begin{proof}
	Let $\Gamma=\Gamma_\C=G(L/\C(x))$ be the differential Galois group of the family of all regular singular differential equations.
	By \cite[Lem. 3.3]{BachmayrHarbaterHartmannWibmer:FreeDifferentialGaloisGroups}, the rank of $\Gamma$ is the smallest cardinal number $\kappa$ such that $L/\C(x)$ is a Picard-Vessiot extension for a family of differential equations of cardinality $\kappa$. Since the family of all differential equations over $\C(x)$ has cardinality $|\C|$, we have $\rank(\Gamma)\leq |\C|$.
	
	
	Let $H$ be an algebraic group and $\beta \colon \Gamma\twoheadrightarrow H$ a quotient map. By the differential Galois correspondence (Theorem \ref{theo: differential Galois correspondencee}) the extension $L^{\ker(\beta)}/\C(x)$ is a Picard-Vessiot extensions with differential Galois group $H$. Since $H$ is algebraic, $L^{\ker(\beta)}/\C(x)$ is a Picard-Vessiot extension of finite type. Therefore, there exists a finite family $\mathcal{F}$ of regular singular differential equations over $\C(x)$ such that the Picard-Vessiot extension $L_\mathcal{F}$ of $\C(x)$ for $\mathcal{F}$ (inside $L$) contains $L^{\ker(\beta)}$. Let $Y\subseteq \C$ be the finite set of (finite) singularities of the differential equations contained in $\mathcal{F}$. Let $X$ be a proper subset of $\C$ with $|X|=\C$ and $Y\subseteq X$ (e.g., $X=\C\smallsetminus\{x_0\}$ with $x_0\notin Y$).
	 Then $$\C(x)\subseteq L^{\ker(\beta)}\subseteq L_\mathcal{F}\subseteq L_Y\subseteq L_X$$ 
	and so, the quotient map $\beta\colon \Gamma\twoheadrightarrow H$ factors as $\beta\colon \Gamma\twoheadrightarrow \Gamma_X\twoheadrightarrow H$. As $\Gamma_X\cong\Gamma(X)$ by Proposition~\ref{prop: main for X}, we see that the condition of Lemma \ref{lemma: step for main theorem} is satisfied. 
\end{proof}
Combining Proposition \ref{prop: main for X} and Theorem \ref{theo: main} we obtain:

\begin{cor} \label{cor: main}
	Let $X$ be a subset of $\C$. Then the differential Galois group of the family of all regular singular differential equations with singularities in $X\cup\{\infty\}$ is isomorphic to the free proalgebraic group on $X$.
\end{cor}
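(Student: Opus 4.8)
The plan is to reduce Corollary~\ref{cor: main} to the two results we have already established: Proposition~\ref{prop: main for X} for proper subsets of $\C$, and Theorem~\ref{theo: main} for $X=\C$. So first I would dispose of the trivial dichotomy: either $X$ is a proper subset of $\C$, or $X=\C$. In the latter case there is nothing to prove, since $\Gamma_\C\cong\Gamma(\C)$ is precisely Theorem~\ref{theo: main} (note $|\C|$ is the cardinality of $\C$, so ``free proalgebraic group on a set of cardinality $|\C|$'' and ``free proalgebraic group on $\C$'' name the same group up to isomorphism, by the universal property). In the former case, $\Gamma_X\cong\Gamma(X)$ is exactly the content of Proposition~\ref{prop: main for X}. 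Hence the corollary is just the conjunction of these two statements, organized by cases.

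The one genuinely substantive point to address is a matching of notation and hypotheses: Proposition~\ref{prop: main for X} is stated under the standing assumption (from the start of Section~\ref{sec:main result}) that $L/\C(x)$ is a fixed Picard-Vessiot extension for the family of \emph{all} regular singular equations, with $L_X$ the sub-Picard-Vessiot extension for those with singularities in $X\cup\{\infty\}$ and $\Gamma_X=G(L_X/\C(x))$. So I would simply note that ``the differential Galois group of the family of all regular singular differential equations with singularities in $X\cup\{\infty\}$'' is by definition (and uniqueness of Picard-Vessiot extensions up to $\C(x)$-$\delta$-isomorphism) isomorphic to $\Gamma_X$, regardless of whether one realizes it inside the fixed $L$ or as an abstract Picard-Vessiot extension. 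This makes the invocation of Proposition~\ref{prop: main for X} and Theorem~\ref{theo: main} legitimate.

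The main (and only) obstacle is essentially bookkeeping: making sure the case $X=\C$ is not accidentally excluded by Proposition~\ref{prop: main for X} (which explicitly requires $X$ proper, since its proof picks a base point $x_0\notin X$), and that Theorem~\ref{theo: main} covers it. Once that is observed, the proof is a one-line case split. I would write it as follows.

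\begin{proof}
	If $X=\C$, this is Theorem~\ref{theo: main}, since $\Gamma_\C=G(L/\C(x))$ is the differential Galois group of the family of all regular singular differential equations over $\C(x)$, and the free proalgebraic group on a set of cardinality $|\C|$ is, by the universal property of Definition~\ref{defi:free proalgebraic group}, isomorphic to $\Gamma(\C)$. If $X$ is a proper subset of $\C$, then by the uniqueness of Picard-Vessiot extensions, the differential Galois group of the family of all regular singular differential equations with singularities in $X\cup\{\infty\}$ is isomorphic to $\Gamma_X=G(L_X/\C(x))$, which by Proposition~\ref{prop: main for X} is isomorphic to the free proalgebraic group $\Gamma(X)$ on $X$.
\end{proof}
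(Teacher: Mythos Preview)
Your proposal is correct and follows exactly the paper's approach: the paper simply states that the corollary is obtained by combining Proposition~\ref{prop: main for X} (for proper subsets $X\subsetneq\C$) and Theorem~\ref{theo: main} (for $X=\C$), which is precisely your case split. Your additional remarks about matching notation and the uniqueness of Picard-Vessiot extensions are fine but not strictly needed.
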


\subsection{Open Questions} It seems natural to wonder if Corollary \ref{cor: main} remains valid when $\C$ is replaced with an algebraically closed field of characteristic zero. Our proof uses transcendental tools, such as the fundamental group, and therefore does not generalize. Even when $X$ is finite, it seems to be unknown if Corollary \ref{cor: main} is true for an algebraically closed field of characteristic zero in place of $\C$.

\bibliographystyle{alpha}
 \bibliography{bibdata}
\end{document}